\newtheorem{thm}{Theorem}[section]
\newtheorem{lem}[thm]{Lemma}
\newtheorem{exa}[thm]{Example}
\theoremstyle{definition}
\newtheorem{defn}{Definition}[section]
\newcommand{\scr}[1]{\mathscr #1}
\definecolor{wco}{rgb}{0.5,0.2,0.3}
\numberwithin{equation}{section} \theoremstyle{remark}
\newtheorem{rem}{Remark}[section]
\newcommand{\ua}{\uparrow}
\renewcommand{\hat}{\widehat}
\title{{\bf Approximation of Invariant Measures for   Regime-Switching Diffusions}
}
\author{Jianhai Bao,\thanks{Department of Mathematics, Central South University,
Changsha, Hunan, 410083,   China, jianhaibao13@gmail.com} \and
Jinghai Shao\thanks{School of Mathematical Science, Beijing Normal
University,  Beijing, 100875, China, shaojh@bnu.edu.cn} \and
Chenggui Yuan\thanks{Department of Mathematics, Swansea University,
Singleton Park, SA2 8PP, UK, C.Yuan@swansea.ac.uk}}
\begin{document}
\def\R{\mathbb R}  \def\ff{\frac} \def\ss{\sqrt} \def\B{\mathbf
B}
\def\N{\mathbb N} \def\kk{\kappa} \def\m{{\bf m}}
\def\dd{\delta} \def\DD{\Dd} \def\vv{\varepsilon} \def\rr{\rho}
\def\<{\langle} \def\>{\rangle} \def\GG{\Gamma}
  \def\nn{\nabla} \def\pp{\partial} \def\EE{\scr E}
\def\d{\text{\rm{d}}} \def\bb{\beta} \def\aa{\alpha} \def\D{\scr D}
  \def\si{\sigma} \def\ess{\text{\rm{ess}}}
\def\beg{\begin} \def\beq{\begin{equation}}  \def\F{\scr F}
\def\Ric{\text{\rm{Ric}}} \def\Hess{\text{\rm{Hess}}}
\def\e{\text{\rm{e}}} \def\ua{\underline a} \def\OO{\Omega}  \def\oo{\omega}
 \def\tt{\tilde} \def\Ric{\text{\rm{Ric}}}
\def\cut{\text{\rm{cut}}} \def\P{\mathbb P} \def\ifn{I_n(f^{\bigotimes n})}
\def\C{\scr C}      \def\aaa{\mathbf{r}}     \def\r{r}
\def\gap{\text{\rm{gap}}} \def\prr{\pi_{{\bf m},\varrho}}  \def\r{\mathbf r}
\def\Z{\mathbb Z} \def\vrr{\varrho} \def\l{\lambda}
\def\L{\scr L}\def\Tt{\tt} \def\TT{\tt}\def\II{\mathbb I}
\def\i{{\rm in}}\def\Sect{{\rm Sect}}\def\E{\mathbb E} \def\H{\mathbb H}
\def\M{\scr M}\def\Q{\mathbb Q} \def\texto{\text{o}} \def\LL{\Lambda}
\def\Rank{{\rm Rank}} \def\B{\scr B} \def\i{{\rm i}} \def\HR{\hat{\R}^d}
\def\to{\rightarrow}\def\l{\ell}\def\lf{\lfloor}\def\rf{\rfloor}
\def\8{\infty}\def\ee{\epsilon} \def\Y{\mathbb{Y}} \def\lf{\lfloor}
\def\rf{\rfloor}\def\3{\triangle} \def\O{\mathcal {O}}
\def\SS{\mathbb{S}}\def\ta{\theta}\def\h{\hat}

\def\la{\langle}\def\ra{\rangle}

\def\trace{\text{\rm{trace}}}
\renewcommand{\bar}{\overline}
\def\Y{\mathbb{Y}}
\renewcommand{\tilde}{\widetilde}

\maketitle

\begin{abstract}
In this paper, we are concerned with long-time behavior of
Euler-Maruyama schemes associated with a range of regime-switching
diffusion processes. The key contributions of this paper lie  in
that  existence and uniqueness of numerical invariant measures are
addressed (i) for regime-switching diffusion processes with finite
state spaces by the Perron-Frobenius theorem  if the ``averaging
condition" holds, and, for the case of reversible Markov chain, via
the principal eigenvalue approach provided that the principal
eigenvalue is positive; (ii) for regime-switching diffusion
processes with countable state spaces by means of a finite partition
method  and an M-Matrix theory. We also
 reveal that numerical invariant measures converge  in
the Wasserstein metric to the underlying ones. Several
examples are constructed to demonstrate our theory.

\noindent
 {\bf AMS subject Classification:} 60H10; 60H35 \\
\noindent {\bf Keywords:} Regime-switching diffusion;  Invariant
measure; Euler-Maruyama scheme; Perron-Fronenius theorem; Principal
eigenvalue; M-matrix
 \end{abstract}

\section{Introduction}
For a regime-switching diffusion process (RSDP), we mean a diffusion
process in a random environment characterized by a Markov chain. The
state vector of an RSDP is a pair $(X_t,\LL_t)$. Here
$\{X_t\}_{t\ge0}$ satisfies a stochastic differential equation (SDE)
\begin{equation}\label{eq1}
\d X_t=b(X_t,\LL_t)\d t+\si(X_t,\LL_t)\d
W_t,~t>0,~X_0=x\in\R^n,\LL_0=i\in\mathbb{S},
\end{equation}
and $\{\LL_t\}_{t\ge0}$ denotes
  a continuous-time Markov chain with the
state space $\mathbb{S}:=\{1,2\cdots,N\}$, $1\le N\le\8,$  and the
transition rules
specified by
\begin{equation}\label{love}
\P(\LL_{t+\3}=j|\LL_t=i)=
\begin{cases}
q_{ij}\3+o(\3),~~~~~~~~i\neq j,\\
1+q_{ii}\3+o(\3),~~~i=j.
\end{cases}
\end{equation}
Various quantities of \eqref{eq1} will be given
  in the next section.
RSDPs have considerable applications in e.g. control problems,
storage modeling, neutral activity, biology and mathematical finance
(see e.g. monographs \cite{YM06,YZ}). Pinsky and Scheutzow \cite{PS}
showed that the overall system $(X_t^x,\LL_t^i)$ need not to be
positive recurrence (resp. transience) even when each subsystem
 is positive recurrence (resp. transience), and Mao and
Yuan  revealed in \cite[Example 5.45, p.223]{YM06} that
$(X_t^x,\LL_t^i)$ is stable although some of the subsystems are not.
So, in some cases,  the dynamical behavior of RSDPs may be markedly
different from diffusion processes without regime switchings. So
far, the works on RSDPs have included ergodicity
\cite{BGM,CM13,S14,SX,Xi}, stability \cite{YM06,SX,XY,YZ},
recurrence and transience \cite{PP,PS,SJ,YZ}, invariant densities
\cite{B12,BHM}, hypoellipticity \cite{B12,BLM}, and so forth.

\smallskip

 Since solving RSDPs is still a
challenging task, numerical schemes and/or approximation techniques
have become one of the viable alternatives, where \cite{YM06,YZ} are
concerned with finite-time (strong or weak) convergence while
\cite{HMY,YM06} are devoted to long-time behavior of numerical
schemes. For more details on numerical analysis of diffusion
processes without regime switching, please refer to the monograph
\cite{KP}. Also, approximations of invariant measures for stochastic
dynamical systems have  attracted much attention, see e.g. Mattingly
et al. \cite{MST} via a Poisson equation, Talay \cite{T90} through
the Kolmogorov equation, and Br\'{e}hier \cite{B14} by means of the
Malliavin calculus. For the counterpart associated with
Euler-Maruyama (EM) algorithms
 with constant/decreasing stepsize of RSDPs, we refer to Mao
et al. \cite{MYY} and  Yuan and Mao \cite{YM05} adopting the M-matrix
theory, and Yin and Zhu \cite{YZ} utilizing the weak convergence
method, where   RSDPs
 therein enjoy finite state spaces. Moreover, sufficient conditions
imposed in \cite{MYY,YM05,YZ} to guarantee existence of numerical
invariant measures are irrelevant to stationary distributions of the
continuous-time Markov chains that can accommodate a set of possible
regimes.

\smallskip

Motivated by \cite{MYY,YM05,YZ}, in this paper  we are also
interested in numerical approximation of invariant measure for RSDP
\eqref{eq1} and \eqref{love}. In particular,  we are concerned with
the following questions:
\begin{enumerate}
\item[(i)] Under what conditions, will the discrete-time semigroup
generated by  EM scheme admit  a unique invariant measure?

\item[(ii)] Will the numerical invariant measure, if it exists,
converge in some metric to the underlying one?
\end{enumerate}
In this paper, we shall  answer the questions above one-by-one  in
several cases.

\smallskip
Throughout the paper,  we stipulate $N<\8$ in Section
\ref{sec2}-\ref{sec4}, and $N=\8$ in Section \ref{sec5}. The content
of this paper is arranged  as follows. In Section \ref{sec2}, by the
Perron-Fronenius theorem, we discuss existence and uniqueness of
invariant measure for semigroup generated by $(X_t^x,\LL_t^i)$ if
\eqref{eq1} is attractive ``in average" (see \eqref{eq8}). In what
follows, we call \eqref{eq8} an ``averaging condition". As Example
\ref{Exa} below shows, our established theory,   Theorem \ref{exa},
covers more interesting models in contrast to   existing results
(see e.g. \cite[Theorem 5.1]{YM03}). By following the idea of
argument for Theorem \ref{exa}, Section \ref{sec3} focus on
existence and uniqueness of numerical invariant measure for RSDP
\eqref{eq1} and \eqref{love} with additive noise and multiplicative
noise respectively. In addition, we also reveal that  numerical
invariant converges in the Wasserstein distance to the underlying
one. For more details, please refer to Theorem \ref{W_p}. We  point
out that the Markov chain considered in Section \ref{sec3} need not
to be reversible. However, for the reversible case, by the principal
eigenvalue approach (see e.g. Chen \cite{Ch}), existence and
uniqueness of numerical invariant measure can also be addressed if
the principal eigenvalue is positive.
 This
is elaborated in Section \ref{sec4}. Note that the Markov chain in
Section \ref{sec3} and \ref{sec4} admits a finite state space. We
 proceed to the countable case in Section \ref{sec5}. For such case,
a finite partition method due to Shao \cite{S14} and an M-matrix
theory (see e.g. \cite[Theorem 2.10, p.68]{YM06}) are adopted to
study existence of numerical invariant measure. More precisely, by a
finite partition method,  EM scheme with a countable state space is
transformed into a new EM with a finite state space. Thus, the
discrete-time semigroup generated by the EM scheme with a countable
state space possesses an invariant measure provided that the one
generated by the new EM with a finite state space does. Moreover,
several examples (see Examples \ref{Exa}, \ref{Exa1}, \ref{ex4.3}
and Remark \ref{re3.1}) are constructed to demonstrate our  theory
established.

\smallskip

Throughout the paper, $c>0$ is a generic constant which is
independent of the time parameters and the stepsize, and may change
from occurrence to occurrence.

\section{Invariant Measure}\label{sec2}
To begin with, we introduce some notation. Let $(\OO, \F,\P)$ be a
probability space with a filtration $\{\F_t\}_{t\ge0}$ satisfying
the usual conditions (i.e. $\F_0$ contains all $\P$-null sets and
$\F_t=\F_{t+}:=\bigcap_{s>t}\F_s)$. Let $\{\LL_t\}_{t\ge0}$ be a
continuous-time Markov chain with the state space
$\mathbb{S}:=\{1,2\cdots,N\}$, $N<\8,$  and $\{W_t\}_{t\ge0}$ an
$m$-dimensional Brownian motion, independent of
$\{\Lambda_t\}_{t\ge0}$, defined on the probability space above. We
assume that the $Q$-matrix $Q:=(q_{ij})_{N\times N}$ is irreducible
and conservative. So the Markov chain $\{\LL_t\}_{t\ge0}$ has a
unique stationary distribution $\mu:=({\mu_1,\cdots,\mu_N})$ which
can be determined by solving linear equation $$\mu Q={\bf0}$$
subject to $\mu_1+\cdots+\mu_N=1 ~\mbox{ and
}~\mu_i>0,i\in\mathbb{S}.$ Here ${\bf0}$ is a zero vector. Let
$\mathcal {P}(\R^n\times\mathbb{S})$ stand for the family of all
probability measures on $\R^n\times\mathbb{S}$. For
$\xi=(\xi_1,\cdots,\xi_n)^*\in\R^n$,  $\xi\gg{\bf0}$ means each
component $\xi_i>0$, $i=1,\cdots,n,$ in which $A^*$ denotes the
transpose of a vector or matrix $A.$ For $a,b\in \R$, $a\wedge
b:=\min\{a,b\}$. For each $R>0$,  $B_R(0):=\{x\in\R^n:|x|\le R\}$,
 the ball of radius $R$ centered at $0$. Let $\|A\|$ be the
Hilbert-Schmidt norm of the matrix $A.$
$\mbox{diag}(a_1,\cdots,a_N)$ denotes the diagonal matrix whose
diagonal entries starting in the upper left corner are
$a_1,\cdots,a_N$.

\smallskip
We assume that, in \eqref{eq1},  $b:\R^n\times\mathbb{S}\mapsto\R^n$
and $\si:\R^n\times\mathbb{S}\mapsto\R^n\otimes\R^m$
 satisfy the local Lipschitz condition, i.e., for each
 $i\in\mathbb{S}$ and $
 R>0$,
 there exists an $L_R>0$ such that
\begin{equation}\label{*02}
|b(x,i)-b(y,i)|+\|\si(x,i)-\si(y,i)\|\le L_R|x-y|,~~~x,y\in B_R(0).
\end{equation}

\smallskip
Additionally,  we assume that
\begin{enumerate}
\item[\textmd{({\bf H})}] For each  $i\in \mathbb{S}$ and $x,y\in\R^n$, there
exist $c_0>0$ and
 $\bb_i\in\R$ such that
\begin{equation}\label{r5}
2\<x,b(x,i)\> +\|\si(x,i)\|^2 \le c_0+\bb_i|x|^2,
\end{equation}
and
\begin{equation}\label{r6}
2\<x-y,b(x,i)-b(y,i)\> +\|\si(x,i)-\si(y,i)\|^2 \le\bb_i|x-y|^2.
\end{equation}
\end{enumerate}

\begin{rem}
{\rm In ({\bf H}), it is worth to pointing out that, for each $i\in
\mathbb{S}$, $\bb_i$ need not to be negative. On the other hand,
without loss of generality, we assume that, for each $i\in
\mathbb{S}$, \eqref{r5} and \eqref{r6} hold respectively with the
same $\bb_i\in\R$  to avoid complex computation.

 }
\end{rem}
 Under \eqref{*02} and
({\bf H}), \eqref{eq1} and \eqref{love} admit a unique non-explosive
solution $(X_t,\LL_t)$, see e.g. \cite[Theorem 3.17, p.93]{YM06}.
Throughout the paper, we write $(X_t^{x,i},\LL_t^i)$ in lieu of
$(X_t,\LL_t)$ to highlight the initial data $(x,i)$. Define a metric
$d(\cdot,\cdot)$ on $\R^n\times\mathbb{S}$ as below
\begin{equation*}
d((x,i),(y,j)):=|x-y|+d_0(i,j),
\end{equation*}
where $d_0(i,j)=0$ for $i=j$, otherwise  $d_0(i,j)=1$. Then
$(\R^n\times\mathbb{S},d(\cdot,\cdot),\B(\R^n\times\mathbb{S}))$ is
a complete separable metric space. For two given probability
measures $\mu$ and $\nu$ on $\R^n\times\mathbb{S}$, define
\begin{equation}\label{d3}
W_p(\mu,\nu):=\inf_{\pi\in \mathcal
{C}(\mu,\nu)}\int_{\R^n\times\mathbb{S}}\int_{
\R^n\times\mathbb{S}}d(x,y)^p\pi(\d x,\d y),~~p\in(0,1],
\end{equation}
where $\mathcal {C}(\mu,\nu)$ denotes the set of all couplings of
$\mu$ and $\nu.$ Let $P_t(x,i;\d y\times\{j\})$ be the transition
probability measure of the pair $(X_t^{x,i},\LL_t^i)$, which is a
time homogeneous Markov process (see e.g. \cite[Theorem 3.28,
p.105-106]{YM06}).
Recall that $\pi\in\mathcal {P}(\R^n\times\mathbb{S})$ is called an
invariant measure of $(X_t^{x,i},\LL_t^i)$ if
\begin{equation*}
\pi(\Gamma\times\{i\})=\sum_{j=1}^N\int_{\R^n}P_t(x,j;\Gamma\times\{i\})\pi(\d
x\times\{j\}),~~~t\ge0,
\end{equation*}
holds for any Borel set $\GG\in\B(\R^n)$ and  $i\in\mathbb{S}.$
For any
$p>0$, let
\begin{equation}\label{r4}
\mbox{diag}({\bf\bb}):=\mbox{diag}(\bb_1,\cdots,\bb_N),~~~~
Q_p:=Q+\ff{p}{2}\mbox{diag}(\bb),~~\eta_p:=-\max_{\gamma\in\mbox{spec}(
Q_p)}\mbox{Re}\gamma,
\end{equation}
where $Q$ is the $Q$-matrix of $\{\Lambda_t\}_{t\ge0},$ and
$\mbox{spec}(Q_p)$ denotes the spectrum of $Q_p.$

\smallskip

The lemma below plays a crucial role for existence   of
 an invariant measure of $(X_t^{x,i},\LL_t^i)$.

\begin{lem}\label{P-F}
{\rm (\cite[Proposition 4.2]{BGM}) Let $N<\8$ and assume that
\begin{equation}\label{eq8}
\sum_{i=1}^N\mu_i\bb_i<0.
\end{equation}
Then, one has
\begin{enumerate}
\item[(i)] $\eta_p>0$ if $\max_{i\in\mathbb{S}}\bb_i\le0;$

\item[(ii)] $\eta_p>0$ for  $p<k$, where $k\in(0,\min_{i\in\mathbb{S},\bb_i>0}\{-2q_{ii}/\bb_i\})$ with $\max_{i\in\mathbb{S}}\bb_i>0.$
\end{enumerate}
}
\end{lem}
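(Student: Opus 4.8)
The plan is to recast the claim in terms of the spectral abscissa $\lambda_p:=\max_{\gamma\in\mbox{spec}(Q_p)}\mbox{Re}\,\gamma=-\eta_p$ and to exploit that $Q_p=Q+\frac p2\,\mbox{diag}(\beta)$ is an irreducible, essentially nonnegative (Metzler) matrix, so that Perron--Frobenius theory applies. First I would note that the off-diagonal entries of $Q_p$ coincide with those of $Q$ and are therefore nonnegative, while irreducibility of $Q$ is inherited by $Q_p$; adding a sufficiently large multiple $cI$ of the identity turns $Q_p+cI$ into a nonnegative irreducible matrix. Applying the Perron--Frobenius theorem to $Q_p+cI$ then shows that $\lambda_p$ is a real and simple eigenvalue of $Q_p$ which dominates the real part of every other eigenvalue and possesses strictly positive right and left eigenvectors $v_p,u_p\gg\mathbf 0$. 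In particular, proving $\eta_p>0$ is equivalent to proving $\lambda_p<0$.

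Next I would compute $\lambda_p$ and its derivative at $p=0$. Since $Q$ is conservative and has stationary distribution $\mu$, we have $Q\mathbf 1=\mathbf 0$ and $\mu Q=\mathbf 0$, so $\lambda_0=0$ with right eigenvector $v_0=\mathbf 1$ and left eigenvector $u_0=\mu$. As $\lambda_p$ is simple it depends analytically on $p$, and the first-order perturbation formula gives $\lambda_p'=\big(\sum_{i}(u_p)_i(v_p)_i\beta_i\big)/\big(2\sum_{i}(u_p)_i(v_p)_i\big)$. Evaluating at $p=0$ with $u_0=\mu$, $v_0=\mathbf 1$ and the normalization $\mu\mathbf 1=\sum_i\mu_i=1$ yields $\lambda_0'=\frac12\sum_{i=1}^N\mu_i\beta_i<0$ by the averaging condition \eqref{eq8}, which already forces $\lambda_p<0$ for all small $p>0$.

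For part (i), when $\max_{i}\beta_i\le0$ the same formula gives $\lambda_p'\le0$ for every $p\ge0$, since each product $(u_p)_i(v_p)_i$ is positive; the inequality is strict because \eqref{eq8} forces some $\beta_i<0$. Hence $\lambda_p$ is strictly decreasing on $[0,\8)$ and, starting from $\lambda_0=0$, remains negative for $p>0$, i.e. $\eta_p>0$. For part (ii) I would argue by continuity: $\lambda_0=0$ together with $\lambda_0'<0$ produces a maximal $k\in(0,\8]$ with $\lambda_p<0$ on $(0,k)$. If $k<\8$, then $\lambda_k=0$, so the positive Perron eigenvector satisfies $Q_k v=\mathbf 0$; reading the $i$-th coordinate gives $(Q_k)_{ii}v_i=-\sum_{j\neq i}q_{ij}v_j<0$, the strict sign coming from $v\gg\mathbf 0$ and $\sum_{j\neq i}q_{ij}=-q_{ii}>0$ (irreducibility and conservativeness). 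Thus $q_{ii}+\frac k2\beta_i<0$ for every $i$, which for the indices with $\beta_i>0$ reads $k<-2q_{ii}/\beta_i$; therefore $k<\min_{i\in\mathbb S,\,\beta_i>0}\{-2q_{ii}/\beta_i\}$ and any such $k$ lies in the asserted interval. The main obstacles are establishing the Perron--Frobenius structure of the Metzler matrix $Q_p$ (simplicity of $\lambda_p$ and positivity of both eigenvectors) and justifying the analytic perturbation formula for $\lambda_p'$; once these are in hand, the sign computation at a zero of $\lambda_p$ in (ii) is the clean step that pins down precisely the stated threshold.
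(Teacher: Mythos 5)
Your proof is correct, but be aware that the paper contains no proof of this lemma to compare against: the statement is quoted with attribution from \cite[Proposition 4.2]{BGM}, and the authors rely entirely on that citation. Judged as a self-contained reconstruction of the cited result, your argument is sound and follows the natural (and essentially standard) spectral route: $Q_p$ is an irreducible Metzler matrix, so Perron--Frobenius applied to $Q_p+cI$ makes $\lambda_p=-\eta_p$ a simple real eigenvalue with strictly positive left/right eigenvectors; $\lambda_0=0$ with $u_0=\mu$, $v_0=\mathbf{1}$ because $Q$ is conservative and irreducible; the first-order perturbation formula for a simple eigenvalue gives $\lambda_0'=\frac12\sum_i\mu_i\beta_i<0$; part (i) follows since $\lambda_p'<0$ for every $p\ge 0$ (you correctly use that \eqref{eq8} together with $\mu_i>0$ forces some $\beta_i<0$, otherwise $\lambda_p\equiv 0$ would be possible); and the diagonal-sign computation at a zero of $\lambda_p$ pins down the location of the threshold in (ii).

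Two remarks, neither of which affects correctness. First, your reading of (ii) is the right one, and your own computation shows that the literal ``for every $p$ below the minimum'' reading would be false: at the critical $k$ where $\lambda_k=0$ you derive the strict inequalities $q_{ii}+\frac k2\beta_i<0$, so the true threshold is strictly smaller than $\min_{i:\beta_i>0}\{-2q_{ii}/\beta_i\}$. For instance, with $N=2$, $q_{12}=1$, $q_{21}=2$, $\beta_1=-1$, $\beta_2=1$, condition \eqref{eq8} holds, $\lambda_p$ vanishes at $p=2$, and $\eta_3<0$ even though $3<-2q_{22}/\beta_2=4$. Hence (ii) must be understood as the existence statement you prove --- the maximal $k$ with $\lambda_p<0$ on $(0,k)$ lies in $\bigl(0,\min_{i:\beta_i>0}\{-2q_{ii}/\beta_i\}\bigr)$ --- which matches the form of the result in \cite{BGM} and is all the paper ever uses (it only needs some $p_0>0$ with $\eta_p>0$ for $0<p<p_0$). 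Second, in (ii) you should add one line disposing of the case $k=\infty$: if $\beta_i>0$ for some $i$, then $\lambda_p\ge q_{ii}+\frac p2\beta_i\to\infty$ as $p\to\infty$ (the spectral abscissa of a Metzler matrix dominates every diagonal entry), so the threshold is necessarily finite; alternatively, if $k=\infty$ the assertion is vacuously true for any admissible choice of the constant.
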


\begin{rem}
{\rm The RSDP \eqref{eq1} and \eqref{love} is said to be attractive
``in average'' if \eqref{eq8} holds (see e.g. Bardet et al.
\cite{BGM}).

}
\end{rem}

Our first main result in this paper is stated as below.
\begin{thm}\label{exa}
{\rm Let $N<\8$ and assume that \eqref{*02}, ({\bf H}) and
\eqref{eq8} hold. Then $(X_t^{x,i},\LL_t^i)$ admits a unique
invariant measure $\pi\in\mathcal {P}(\R^n\times\mathbb{S})$. }
\end{thm}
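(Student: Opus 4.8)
The plan is to extract from the averaging condition \eqref{eq8} a Perron--Frobenius weight for the regime dynamics and then use it twice: once to produce a uniform-in-time moment bound (for existence) and once to produce a Wasserstein contraction (for uniqueness). Since $Q_p=Q+\ff p2\,\mathrm{diag}(\bb)$ is an irreducible essentially nonnegative (Metzler) matrix, Perron--Frobenius theory furnishes a strictly positive right eigenvector $\xi=(\xi_1,\dots,\xi_N)\gg{\bf0}$ attached to the eigenvalue $-\eta_p$ of largest real part, i.e. $Q_p\xi=-\eta_p\xi$. By Lemma \ref{P-F}, \eqref{eq8} guarantees $\eta_p>0$ for a suitable range of $p$; shrinking $p$ if necessary (case (i) allows every $p>0$, case (ii) every $p<k$), I may and will fix some $p\in(0,1]$ with $\eta_p>0$, so that the exponent matches the metric $W_p$ of \eqref{d3}.

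For existence I would first show $\sup_{t\ge0}\E|X_t^{x,i}|^p<\8$. Applying It\^o's formula to $\xi_{\LL_t}|X_t^{x,i}|^p$, invoking \eqref{r5}, discarding the second-order term $\ff{p(p-2)}2|X_t^{x,i}|^{p-4}|\si^*X_t^{x,i}|^2\le0$ (valid since $p\le2$), and using $Q_p\xi=-\eta_p\xi$, I obtain a drift bounded above by $-\eta_p\,\xi_{\LL_t}|X_t^{x,i}|^p+c\,|X_t^{x,i}|^{p-2}$, whose last term is negligible for large $|X_t^{x,i}|$. This is a Foster--Lyapunov inequality and yields the claimed moment bound, hence tightness of the time-averaged laws $\ff1T\int_0^T P_t(x,i;\cdot)\,\d t$ on the product space $\R^n\times\mathbb S$. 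Feller continuity of $P_t$, which follows from continuous dependence on the initial datum guaranteed by \eqref{*02} and \eqref{r6}, then lets me pass to a weak limit along a subsequence and conclude, by the Krylov--Bogolyubov argument, that an invariant measure $\pi$ exists.

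For uniqueness I would establish a contraction in $W_p$. The core estimate treats two solutions sharing the \emph{same} initial regime $i$ and driven by the same $W$: with $Z_t=X_t^{x,i}-X_t^{y,i}$, It\^o's formula for $\xi_{\LL_t}|Z_t|^p$, the monotonicity \eqref{r6}, the nonpositivity of the second-order term (again $p\le2$), and $Q_p\xi=-\eta_p\xi$ give $\ff{\d}{\d t}\E[\xi_{\LL_t}|Z_t|^p]\le-\eta_p\,\E[\xi_{\LL_t}|Z_t|^p]$, whence $\E|X_t^{x,i}-X_t^{y,i}|^p\le c\,\e^{-\eta_p t}|x-y|^p$. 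To compare two arbitrary starting points $(x,i)$ and $(y,j)$ I would run a Markovian coupling $(\LL_t^i,\LL_t^j)$ of the two chains that coalesces at a time $\tau$ with exponential tail $\P(\tau>t)\le c\,\e^{-\delta t}$ (possible since $\mathbb S$ is finite and $Q$ is irreducible), while driving both diffusions by the common $W$. Using $(a+b)^p\le a^p+b^p$ for $p\le1$, the form of the metric $d$, the strong Markov property at $\tau$, the core contraction on $\{t\ge\tau\}$, and the moment bound to control $\{t<\tau\}$, I would deduce $W_p(P_t(x,i;\cdot),P_t(y,j;\cdot))\to0$ as $t\to\8$. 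Uniqueness then follows, since any two invariant measures satisfy $W_p(\pi_1,\pi_2)=W_p(\pi_1P_t,\pi_2P_t)\le\int\!\!\int W_p(P_t(x,i;\cdot),P_t(y,j;\cdot))\,\pi_1\,\pi_2\to0$.

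I expect the genuinely delicate step to be the regime-mismatch case $i\ne j$: before coalescence the two diffusions obey different $\bb$'s, so the Perron weight $\xi$ no longer produces contraction, and one must separately bound $\E|Z_t|^p$ on $\{t<\tau\}$ (through the uniform moment estimate and a mild higher-moment interpolation) and then marry it to the coalescence tail $\P(\tau>t)$. By contrast, the equal-regime contraction is essentially automatic once the eigenvector $\xi$ and the sign $\eta_p>0$ are in hand.
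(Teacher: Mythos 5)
Your proposal is correct and follows essentially the same route as the paper's proof: a Perron--Frobenius eigenvector $\xi^{(p)}\gg{\bf0}$ for $Q_p$ with $\eta_p>0$ via Lemma \ref{P-F}, an It\^o/Lyapunov estimate giving $\sup_{t\ge0}\E|X_t^{x,i}|^p<\8$ plus Krylov--Bogolyubov and Feller continuity for existence, and, for uniqueness, the same-regime $W_p$-contraction married to the exponential tail of the coalescence time $\tau$ of the finite irreducible chain (with a H\"older/higher-moment interpolation on $\{\tau>t\}$), exactly as in the paper. The only technical refinement the paper makes is to work with $(1+|x|^2)^{p/2}$ rather than $|x|^p$, since the latter is not $C^2$ at the origin for $p<2$; your argument would need the same smoothing.
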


\begin{proof}
Let $Q_{p,t}:=\e^{tQ_p}$, where $Q_p$ is defined in \eqref{r4}. Then
the spectral radius $\mbox{Ria}( Q_{p,t})$ of $Q_{p,t}$ equals to
$\e^{-\eta_p}$. Since all coefficients of $  Q_{p,t}$ are positive,
the Perron-Frobenius theorem (see e.g. \cite[p.6]{CM}) yields that
$-\eta_p$ is a simple eigenvalue of $Q_p.$ Note that the eigenvalue
of $Q_{p,t}$ corresponding to $\e^{-\eta_p}$ is also an eigenvalue
of $Q_p$ corresponding to $-\eta_p.$ The Perron-Frobenius theorem
(see e.g. \cite[p.6]{CM}) ensures that, for $Q_p$, there exists an
eigenvector $\xi^{(p)}=(\xi_1^{(p)},\cdots,\xi_N^{(p)})\gg{\bf0}$
 corresponding to $-\eta_p.$ Now, by Lemma \ref{P-F} above,
there exists some $ p_0>0$ such that $\eta_p>0$ for any $0< p< p_0.$
Hereinafter, fix a $p$ with $0< p<1\wedge p_0$ and the corresponding
eigenvector $\xi^{(p)}\gg{\bf0}$. Then we obtain that
\begin{equation}\label{eq10}
Q_{p}\xi^{(p)}=-\eta_{ p}\xi^{(p)}\ll{\bf0}.
\end{equation}
By the It\^o formula and  $p\in(0,1\wedge p_0)$,  we
obtain from ({\bf H}) and \eqref{eq10} that
\begin{equation*}
\begin{split}
&\e^{\eta_{p} t}\E((1+|X_t^{x,i}|^2)^{
p/2}\xi_{\LL_t^i}^{(p)})\\&=(1+|x|^2)^{
p/2}\xi_{i}^{(p)}+\E\int_0^t\e^{\eta_{ p}
s}\{\eta_{p}(1+|X_s^{x,i}|^2)^{p/2}\xi_{\LL_s^i}^{(p)}+(1+|X_s^{x,i}|^2)^{p/2}(Q\xi^{(p)})(\LL_s^i)\\
&\quad+\ff{p}{2}(1+|X_s^{x,i}|^2)^{(p-2)/2}(2\<X_s^{x,i},b(X_s^{x,i},\LL_s^i)\>+\|\si(X_s^{x,i},\LL_s^i)\|^2)\xi_{\LL_s^i}^{(p)}\\
&\quad+\ff{p(p-2)}{2}(1+|X_s^{x,i}|^2)^{(p-4)/2}|\si^*(X_s^{x,i},\LL_s^i)X_s^{x,i}|^2\xi_{\LL_s^i}^{(p)}\}\d
s\\
&\le c(1+|x|^p+\e^{\eta_p t})+\E\int_0^t\e^{\rr
s}\{\eta_p\xi_{\LL_s^i}^{(p)}+(Q_p\xi^{(p)})(\LL_s^i)\}(1+|X_s^{x,i}|^2)^{p/2}\d
s\\
&= c(1+|x|^p+\e^{\eta_p t}).
\end{split}
\end{equation*}
This
implies that
\begin{equation}\label{*1}
\sup_{t\ge0}\E|X_t^{x,i}|^p<c\{1+(1+|x|^p)\e^{-\eta_pt}\}.
\end{equation}

\smallskip

Observe that $(X_t^{x,i},\LL_t^i)$ is Feller continuous (see e.g.
\cite[Theorem 2.18, p.48]{YZ}) and $B_R(0)$ is a compact subset of
$\R^n$. For arbitrary $t>0$, define a probability measure
\begin{equation*}
\mu_t(\GG):=\ff{1}{t}\int_0^tP_s(x,i;\GG)\d
s,~~~\GG\in\B(\R^n\times\mathbb{S}).
\end{equation*}
Then, for any $\vv>0$, by \eqref{*1} and Chebyshev's inequality,
there exists $R>0$ sufficiently large  such that
\begin{equation*}
\mu_t(B_R\times \mathbb{S})=\ff{1}{t}\int_0^tP(s,x,i;B_R\times
\mathbb{S})\d s\ge1-\ff{\sup_{t\ge0}\E|X_t^{x,i}|^p}{R^p}\ge1-\vv.
\end{equation*}
Hence $\{\mu_t\}_{t\ge0}$ is tight  and there exists an invariant
measure of $(X_t^{x,i},\LL_t^i)$ (see e.g. \cite[Theorem  4.14,
p.128]{Ch04}).

\smallskip
Next, we show  uniqueness of invariant measure. Again, by the It\^o
formula, it follows from \eqref{eq10} and ({\bf H}) that
\begin{equation*}
\begin{split}
\e^{\eta_{p} t}\E(|X_t^{x,i}-X_t^{y,i}|^{p}\xi_{\LL_t^i}^{(p)})
&\le|x-y|^{p}\xi_{i}^{(p)}+\E\int_0^t\e^{\eta_{p}
s}\{\eta_{p}\xi_{\LL_s^i}^{(p)}+(Q_{
p}\xi^{(p)})(\LL_s^i)\}|X_s^{x,i}-X_s^{y,i}|^{p}\d
s\\
&=|x-y|^{p}\xi_i^{(p)},
\end{split}
\end{equation*}
which gives
\begin{equation}\label{*2}
\E(|X_t^{x,i}-X_t^{y,i}|^{p})\le c|x-y|^{p}\e^{-\eta_{p}t}.
\end{equation}
Set $$\tau:=\inf\{t\ge0:\LL_t^i=\LL_t^j\}.$$ Since $\mathbb{S}$ is a
finite set, and $Q$ is irreducible, there exists $\theta>0$ such
that
\begin{equation}\label{T5}
\P(\tau>t)\le\e^{-\theta t},~~~~t>0.
\end{equation}
Observe that \eqref{*1}  holds   with different $\eta_p$ and
$\xi^{(p)}$ for any $0< p<1\wedge p_0$. For $0<p<1\wedge p_0$ above,
choose $q>1$ such that $0<pq<1\wedge p_0.$ By H\"older's inequality,
we derive from \eqref{*1}-\eqref{T5} that
\begin{equation*}
\begin{split}
\E|X_t^{x,i}-X_t^{y,j}|^{p}&=\E(|X_t^{x,i}-X_t^{y,j}|^{p}{\bf1}_{\{\tau>t\}})
+\E(|X_t^{x,i}-X_t^{y,j}|^{p}{\bf1}_{\{\tau\le
t\}})\\
&\le
(\E(|X_t^{x,i}-X_t^{y,j}|^{pq}))^{1/q}(\P(\tau>t))^{1-1/q}+\E({\bf1}_{\{\tau\le
t\}}\E(|X_t^{x,i}-X_t^{y,j}|^{p}|\F_\tau))\\
&\le
(\E(|X_t^{x,i}-X_t^{y,j}|^{pq}))^{1/q}(\P(\tau>t))^{1-1/q}+\E({\bf1}_{\{\tau\le
t\}}\E(|X_{t-\tau}^{X_{\tau}^{x, i},\LL_{\tau}^{i}}-X_{t-\tau}^{X_{\tau}^{y, j},\LL_{\tau}^{j}}|^{p})\\
&\le \e^{-\ff{q-1}{2q}\theta
t}(\E(|X_t^{x,i}-X_t^{y,j}|^{pq}))^{1/q}+c\E({\bf1}_{\{\tau\le
t\}}\e^{-\eta_p(t-\tau)}\E|X_{t-\tau}^{X_{\tau}^{x, i},\LL_{\tau}^{i}}-X_{t-\tau}^{X_{\tau}^{y, j},\LL_{\tau}^{j}}|^{p})\\
&\le c\e^{-\rr t},
\end{split}
\end{equation*}
where $\rr:=\ff{(q-1)\theta}{2q}\wedge\ff{\eta_p}{2}$. Note that
\begin{equation}\label{T1}
W_p(\dd_{(x,i)}P_t,\dd_{(y,j)}P_t)\le\E|X_t^{x,i}-X_t^{y,j}|^{p}+\P(\LL_t^i\neq\LL_t^j).
\end{equation}
Assume that $\pi,\nu\in\mathcal {P}(\R^n\times\mathbb{S})$ are
invariant measures of $(X_t^{x,i},\LL_t^i)$. By
 the Kantorovich-Rubinstein duality formula (see e.g. \cite[Theorem
5.10]{V09}),  it follows from \eqref{T1} that
\begin{equation}\label{T}
\begin{split}
W_{p}(\pi,\nu)=W_{p}(\pi P_t,\nu P_t)
&=\sup_{\varphi:\mbox{Lip}(\varphi)=1}\Big\{\int_{\R^n\times\mathbb{S}}\varphi(x,i)\d(\pi
P_t)-\int_{\R^n\times\mathbb{S}}\varphi(x,i)\d(\nu P_t)\Big\}\\
&\le\int_{\R^n\times\mathbb{S}}\int_{\R^n\times\mathbb{S}}\pi(\d
x\times\{i\})\nu(\d
y\times\{j\})W_p(\dd_{(x,i)}P_t,\dd_{(y,j)}P_t)\\
&\longrightarrow0,
\end{split}
\end{equation}
where
\begin{equation*}
\mbox{Lip}(\varphi):=\sup\Big\{\ff{\varphi(x,i)-\varphi(y,j)}{d^p((x,i),(y,j))}:(x,i)\neq(y,j)\Big\}.
\end{equation*}
Hence,  uniqueness of invariant measure follows.
\end{proof}

Next, we provide an example  to demonstrate that our theory is more
general than that of the existing literature.
\begin{exa}\label{Exa}
{\rm Let $\{\LL_t\}_{t\ge0}$ be a right-continuous Markov chain
taking values in $\mathbb{S}:=\{0,1\}$ with the generator
\begin{equation}\label{T2}
Q= \left(\begin{array}{ccc}
  -4 & 4\\
  \gamma & -\gamma\\
  \end{array}
  \right)
  \end{equation}
with some $\gamma>0.$  Consider a scalar Ornstein-Uhlenback (O-U)
process with regime switching
\begin{equation}\label{*5}
\d X_t=\aa_{\LL_t}X_t\d t+\si_{\LL_t}\d
W_t,~~~t>0,~~X_0=x,~\LL_0=i_0\in\mathbb{S},
\end{equation}
where $\aa_\cdot,\si_\cdot:\mathbb{S}\mapsto\R$ such that $\aa_0=1$,
and $\aa_1=-1/2$.

\smallskip

By an M-Matrix approach,  $(X_t^{x,i},\LL_t^i)$, determined by
\eqref{T2} and \eqref{*5},  has a unique invariant measure for
$\gamma\in(0,1)$ (see e.g. \cite[Example 5.1]{YM03}). It is easy to
see that the stationary distribution of $\{\LL_t\}_{t\ge0}$ is
\begin{equation*}
\mu=(\mu_0,\mu_1)=\Big(\ff{\gamma}{4+\gamma},\ff{4}{4+\gamma}\Big).
\end{equation*}
For the O-U process \eqref{*5},  $\bb_0=2$, $\bb_1=-1$ in ({\bf H})
and \eqref{eq8} holds with $\gamma\in(0,2)$. So, by Theorem
\ref{exa}, $(X_t^{x,i},\LL_t^i)$, determined by \eqref{T2} and
\eqref{*5}, admits a unique invariant measure $\pi \in\mathcal
{P}(\R\times\mathbb{S})$ for $\gamma\in(0,2)$. This means that our
result cannot be covered by the existing results. }
\end{exa}

\begin{rem}
{\rm In \eqref{eq1}, let $b,\si:\R\times\mathbb{S}\mapsto\R$,   with
$b(0,i)=\si(0,i)\equiv0$ for $i\in\mathbb{S}$, satisfy the global
Lipschitz condition, and $\{W_t\}_{t\ge0}$ be a scalar Brownian
motion. For each $i\in\mathbb{S}$, assume that there exists
$\bb_i\in\R$ such that
\begin{equation*}
2(x-y)(b(x,i)-b(y,i)) \le\bb_i|x-y|^2,~~~x,y\in\R.
\end{equation*}
By \cite[Lemma 3.2, p.120]{m08}, for $X_0^{x,i}=x\neq0$, $
\P(X_t^{x,i}\neq0 \mbox{ on } t\ge0)=1, $ i.e., almost all the
sample path of any solution starting from a non-zero state will
never reach the origin. In the sequel, without loss of generality,
we assume $X_0^{x,i}=x>0$.  Following the first part of argument for
Theorem \ref{exa}, and applying  It\^o's formula to
$\E((X_t^{x,i})^p\xi_{\LL_t^i}^{(p)})$, we can deduce that
$(X_t^{x,i},\LL_t^i)$ admits an invariant measure $\pi\in\mathcal
{P}(\R\times\mathbb{S})$ whenever $\sum_{i=1}^N\mu_i\bb_i<0$. Next,
for any $x>y$, by the comparison theorem (see e.g. \cite[Theorem
1.1, p.352]{IW89}), we have $X_t^{x,i}>X_t^{y,i}$ a.s. Then,  by
imitating the second part of argument for Theorem \ref{exa} and
utilizing It\^o's formula to
$\E((X_t^{x,i}-X_t^{y,i})^p\xi_{\LL_t^i}^{(p)}),$ uniqueness of
invariant measure for $(X_t^{x,i},\LL_t^i)$ follows provided that
$\sum_{i=1}^N\mu_i\bb_i<0$. Therefore,  for a scalar
RSDP, existence and uniqueness of invariant measure   can be
determined only by the drift coefficient in some cases. }
\end{rem}

\section{Numerical Invariant Measure}\label{sec3}
In the last section, under the ``averaging condition'' \eqref{eq8},
we discuss existence and uniqueness of invariant measure for the
semigroup generated by the pair $(X_t^{x,i},\LL_t^i)$, determined by
\eqref{eq1} and \eqref{love}.
 In this section, assuming  $N<\8$ we turn to study existence and
uniqueness of invariant measure for the
semigroup generated by the EM scheme constructed as below. For a
given stepsize $\dd\in(0,1)$, we define the discrete-time EM scheme
associated with \eqref{eq1} as follows
\begin{equation}\label{eq2}
\bar Y_{(k+1)\dd}^{x,i}:=\bar Y_{k\dd}^{x,i}+b(\bar
Y_{k\dd}^{x,i},\LL_{k\dd}^i)\dd+\si(\bar
Y_{k\dd}^{x,i},\LL_{k\dd}^i)\3W_k,~k\ge0,~ \bar
Y_0^{x,i}=x,\LL_0^i=i\in\mathbb{S},
\end{equation}
where $\3W_k:=W_{(k+1)\dd}-W_{k\dd}$ stands for the  Brownian motion
increment. For convenience, we also need the following
continuous-time EM scheme
\begin{equation}\label{eq3}
Y_t^{x,i}:=x+\int_0^tb(\bar Y_{\lf s/\dd\rf\dd}^{x,i},  \LL_{\lf
s/\dd\rf\dd}^i)\d s+\int_0^t\si(\bar Y_{\lf
s/\dd\rf\dd}^{x,i},\LL_{\lf s/\dd\rf\dd}^i) \d
W_s,~t\ge0,~\LL_0^i=i\in\mathbb{S},
  \end{equation}
where, for $a\ge0,$ $\lf a\rf$ denotes the integer part of $a.$
Note that $ Y_{k\dd}^{x,i}=\bar Y_{k\dd}^{x,i},~k\ge0. $ That is,
the discrete-time EM scheme \eqref{eq2} coincides with the
continuous-time EM scheme \eqref{eq3} at the gridpoints whenever
they enjoy the same starting points. Hence, for some quantitative
analysis, it is sufficient to focus on $\{Y_t^{x,i}\}_{t\ge0}$
instead of $\{\bar Y_{k\dd}^{x,i}\}_{k\ge0}$.

\smallskip

Let $P_{k\dd}^\dd(x,i;\d y\times\{j\})$ be the transition probability
kernel of $(\bar Y_{k\dd}^{x,i},\LL_{k\dd}^i)$, which is a time
homogeneous Markov chain (see e.g. \cite[Theorem 6.14,
p.250]{YM06}).
If
$\pi^\dd\in\mathcal {P}(\R^n\times\mathbb{S})$ satisfies
\begin{equation*}
\pi^\dd(\Gamma\times\{i\})=\sum_{j=1}^N\int_{\R^n}P_{k\dd}^\dd(x,j;\Gamma\times\{i\})\pi^\dd(\d
x\times\{j\}),~\GG\in\B(\R^n),
\end{equation*}
then we call $\pi^\dd\in\mathcal {P}(\R^n\times\mathbb{S})$ an
invariant measure of $(\bar Y_{k\dd}^{x,i},\LL_{k\dd}^i)$. Moreover,
the invariant measure $\pi^\dd\in\mathcal {P}(\R^n\times\mathbb{S})$
is also said to be a numerical invariant measure of
$(X_t^{x,i},\LL_t^i)$.



\smallskip

In this section, we further assume that
$b:\R^n\times\mathbb{S}\mapsto\R^n$ and
$\si:\R^n\times\mathbb{S}\mapsto\R^n\otimes\R^m$
 are globally  Lipschitzian, i.e., for each
 $i\in\mathbb{S}$ and $x,y\in\R^n$,
 there exists an $L>0$ such that
\begin{equation}\label{eq5}
|b(x,i)-b(y,i)|+\|\si(x,i)-\si(y,i)\|\le L|x-y|.
\end{equation}
This implies the linear growth condition:
\begin{equation}\label{eq6}
|b(x,i)|+\|\si(x,i)\|\le L_0+ L|x|,~~~~x\in\R^n,
\end{equation}
where $L_0:=\max_{i\in\mathbb{S}}\{|b(0,i)|+\|\si(0,i)\|\}$.

\smallskip

In the sequel, we shall investigate existence and uniqueness of
 invariant measure for $(\bar Y_{k\dd}^{x,i},\LL_{k\dd}^i)$,
 determined by \eqref{eq2} and \eqref{love} with   additive noise
 and   multiplicative noise case respectively.

\subsection{Additive Noise Case}\label{additive}
We here consider \eqref{eq1} with additive noise in
the form
\begin{equation}\label{T3}
\d X_t=b(X_t,\LL_t)\d t+\si(\LL_t)\d
W_t,~t>0,~X_0=x\in\R^n,\LL_0=i\in\mathbb{S},
\end{equation}
where $\si:\mathbb{S}\mapsto\R^n\otimes\R^m$, and the other
quantities are defined exactly as in \eqref{eq1} and \eqref{love}.
Moreover, the EM scheme $\bar Y_{k\dd}^{x,i}$ associated with
\eqref{T3} is constructed as in \eqref{eq2} with
$\si(\cdot,\cdot)\equiv\si(\cdot)$. In what follows,
$\xi^{(p)}\gg{\bf0}$ is the eigenvector $Q_p$, defined in
\eqref{r4}, with the corresponding eigenvalue $-\eta_p<0$ for
$0<p<1\wedge p_0$, i.e., \eqref{eq10} holds.
 Let
\begin{equation}\label{T8}
\hat{\xi}_0:=\max_{i\in\mathbb{S}}\xi_i^{(p)},~~~
\bar\xi_0:=(\min_{i\in\mathbb{S}}\xi_i^{(p)})^{-1},~~~\bb_0:=\max_{i\in\mathbb{S}}|\bb_i|,~~~q_0:=\max_{i\in\mathbb{S}}(-q_{ii}).
\end{equation}
Set
\begin{equation}\label{*11}
\aa:=
p\{\bb_0+4L^2(3+4\bb_0)+4^{(2+p)/2}\bb_0(4^{p/2}L^p+q_0\hat{\xi}_0\bar\xi_0)\}.
\end{equation}

Our main result in this subsection is as follows.

\begin{thm}\label{boun}
{\rm Let $N<\8,$ and assume further that ({\bf H}), \eqref{eq8}, and
\eqref{eq5} hold.
 Then, for
 \begin{equation}\label{*12}
 \dd<(1/(16L^2))\wedge(\eta_p/\aa)^{2/p},
 \end{equation}
 $(\bar
Y_{k\dd}^{x,i},\LL_{k\dd}^i)$ admits a unique invariant measure
$\pi^\dd\in\mathcal {P}(\R^n\times\mathbb{S}).$ }
\end{thm}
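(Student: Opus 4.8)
The plan is to mirror the continuous-time argument of Theorem \ref{exa}, now applied to the discrete-time chain $(\bar Y_{k\dd}^{x,i},\LL_{k\dd}^i)$. The strategy splits into two halves: first establish existence via a uniform moment bound plus tightness of the Cesàro averages of the transition kernel, and then establish uniqueness via a contraction estimate in the Wasserstein distance $W_p$.

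For existence, the key object to control is $\E(\,(1+|\bar Y_{k\dd}^{x,i}|^2)^{p/2}\xi_{\LL_{k\dd}^i}^{(p)})$, the discrete analogue of the quantity used in Theorem \ref{exa}. First I would write one step of the EM recursion \eqref{eq2} and expand $(1+|\bar Y_{(k+1)\dd}|^2)^{p/2}$, using $p\in(0,1\wedge p_0)$ so that the map $u\mapsto(1+u)^{p/2}$ is concave. The global Lipschitz and linear growth conditions \eqref{eq5}--\eqref{eq6}, together with ({\bf H}) in the form \eqref{r5}, should yield a recursive inequality of the shape
\begin{equation*}
\E\big((1+|\bar Y_{(k+1)\dd}|^2)^{p/2}\xi_{\LL_{(k+1)\dd}}^{(p)}\big)\le (1-\eta_p\dd+\aa\,\dd^{1+p/2})\,\E\big((1+|\bar Y_{k\dd}|^2)^{p/2}\xi_{\LL_{k\dd}}^{(p)}\big)+c\,\dd,
\end{equation*}
where the $\eta_p\dd$ term comes from the eigenvalue relation \eqref{eq10} exactly as in the continuous case, and the error constant $\aa$ defined in \eqref{*11} collects the higher-order-in-$\dd$ remainders arising from (a) the discretization of the drift/diffusion, (b) the concavity correction in expanding the power $p/2$, and (c) the coupling to the chain $\LL$ over a step of length $\dd$ (the factor $q_0\hat\xi_0\bar\xi_0$). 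The restriction \eqref{*12}, namely $\dd<(\eta_p/\aa)^{2/p}$, is precisely what guarantees $1-\eta_p\dd+\aa\,\dd^{1+p/2}<1$, so iterating gives $\sup_{k\ge0}\E|\bar Y_{k\dd}^{x,i}|^p<\8$. From this uniform bound, Chebyshev's inequality makes the averaged measures $\mu_{k\dd}^\dd(\cdot):=\frac1k\sum_{l=0}^{k-1}P_{l\dd}^\dd(x,i;\cdot)$ tight on $\R^n\times\mathbb{S}$, and Feller continuity of the EM chain together with the Krylov--Bogoliubov argument produces an invariant measure $\pi^\dd$.

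For uniqueness, I would run the same one-step expansion on the difference process $\bar Y_{k\dd}^{x,i}-\bar Y_{k\dd}^{y,i}$ driven by the \emph{same} noise increments and the \emph{same} chain $\LL^i$, using the one-sided Lipschitz bound \eqref{r6}. This gives a contraction $\E|\bar Y_{k\dd}^{x,i}-\bar Y_{k\dd}^{y,i}|^p\le c\,|x-y|^p(1-\eta_p\dd+\aa\,\dd^{1+p/2})^k$, decaying geometrically under \eqref{*12}. To handle distinct initial regimes $i\neq j$ I would introduce the coupling time $\tau:=\inf\{k\dd:\LL_{k\dd}^i=\LL_{k\dd}^j\}$; irreducibility and finiteness of $\mathbb{S}$ give $\P(\tau>k\dd)\le c\,\e^{-\theta k\dd}$ as in \eqref{T5}, and after the two chains meet they can be run with identical driving noise so the state-difference contraction takes over. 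A Hölder split across $\{\tau>t\}$ and $\{\tau\le t\}$, exactly as in the display preceding \eqref{T1}, yields $W_p(\dd_{(x,i)}P_{k\dd}^\dd,\dd_{(y,j)}P_{k\dd}^\dd)\to0$; feeding this into the Kantorovich--Rubinstein duality \eqref{T} forces any two invariant measures to coincide.

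The main obstacle I anticipate is the bookkeeping in the one-step moment estimate: unlike the continuous-time Itô computation, where cancellations are exact through \eqref{eq10}, the discrete scheme produces genuine $O(\dd^{1+p/2})$ error terms that must be bounded so cleanly that the net one-step factor stays strictly below $1$. In particular, isolating the correct constant $\aa$ in \eqref{*11}—keeping track of the concavity correction for the fractional power, the Brownian increment moments $\E|\3W_k|^{p}$, and the probability that $\LL$ jumps within a step (which couples $\xi_{\LL_{(k+1)\dd}}^{(p)}$ to $\xi_{\LL_{k\dd}}^{(p)}$ and contributes the $q_0\hat\xi_0\bar\xi_0$ piece)—is the delicate step on which the threshold \eqref{*12}, and hence the whole theorem, rests.
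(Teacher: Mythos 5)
Your proposal is correct, and its overall architecture --- a uniform $p$-th moment bound plus tightness/Krylov--Bogoliubov for existence, then a same-noise contraction combined with a regime-coupling time and the Kantorovich--Rubinstein duality for uniqueness --- is exactly that of the paper. The genuine difference lies in how the key estimates are produced. The paper never writes a one-step recursion: it applies It\^o's formula to the weighted functional $\e^{\rr t}(1+|Y_t^{x,i}|^2)^{p/2}\xi^{(p)}_{\LL_t^i}$ along the \emph{continuous-time interpolation} \eqref{eq3}, so that the eigenvalue relation \eqref{eq10} cancels the leading term verbatim as in Theorem \ref{exa}, and all discretization effects are isolated in explicit error processes ($\Theta_1,\Theta_2$ for existence, $\Upsilon_1,\Upsilon_2$ for uniqueness) bounded via \eqref{d5}--\eqref{d6} and the one-step jump estimate \eqref{T4}; the conclusion follows by choosing the exponential rate $\rr=\eta_p-\aa\dd^{p/2}>0$. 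This is the same smallness condition you isolate, since $1-\eta_p\dd+\aa\dd^{1+p/2}<1$ iff $\aa\dd^{p/2}<\eta_p$ iff $\dd<(\eta_p/\aa)^{2/p}$, i.e.\ \eqref{*12}. Your discrete route (concavity of $u\mapsto(1+u)^{p/2}$, conditional independence of $\3W_k$ and the increment of $\LL$, the expansion $\e^{\dd Q}\xi^{(p)}=\xi^{(p)}+\dd\, Q\xi^{(p)}+O(\dd^2)$) is viable and more elementary --- no stochastic calculus is needed, and it makes \eqref{*12} transparent as a one-step contraction condition --- but it has the caveat you yourself flag: the constant your bookkeeping yields will not automatically coincide with the $\aa$ of \eqref{*11}, which is \emph{defined} by the paper's interpolation computation, so strictly you would prove the theorem with a possibly different explicit threshold. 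Two further points of contact you could not have seen: the paper's uniqueness step only establishes the same-regime contraction \eqref{T6} and then invokes the coupling-time/H\"older/duality argument of Theorem \ref{exa} wholesale (your grid coupling time and tail bound are the discrete version of \eqref{T5}); and the interpolation $Y_t$ is reused later (Theorem \ref{W_p} rests on finite-time strong convergence of the EM scheme), which is partly why the paper routes the whole proof through it.
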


\begin{proof}
We divide the whole proof into two parts.

\smallskip

\noindent ({\bf i}) Existence of an Invariant Measure. For each
integer $q\ge1$, define the measure
\begin{equation*}
\mu_q(B_R(0)\times\mathbb{S}):=\ff{1}{q}\sum_{k=0}^q\P((\bar
Y_{k\dd}^{x,i},\LL_{k\dd}^i)\in B_R(0)\times\mathbb{S}).
\end{equation*}
To show existence of an invariant measure, it suffices to show that,
for any $(x,i)\in\R^n\times\mathbb{S},$
\begin{equation}\label{w}
\sup_{k\ge0}\E|\bar Y_{k\dd}^{x,i}|^p<\8,~~~p\in(0,1\wedge p_0).
\end{equation}
Indeed, if so,  the Chebyshev inequality  yields
 that the measure sequence  $\{\mu_q(\cdot)\}_{q\ge1}$ is tight. Then, one can
 extract a subsequence which converges weakly to an invariant
 measure (see e.g. Meyn and Tweedie \cite{MT}).

\smallskip

In what follows, we prove that \eqref{w} holds.
Let $W_{t,\dd}:=|W_t-W_{\lfloor t/\dd\rfloor\dd}|^2$. By \eqref{eq6}
and \eqref{*12}, one has
\begin{equation}\label{d5}
|\bar Y_{\lfloor t/\dd\rfloor\dd}^{x,i}|^2\le
c(\dd+W_{t,\dd})+4|Y_t^{x,i}|^2,
\end{equation}
and
\begin{equation}\label{d6}
|Y_t^{x,i}-\bar Y_{\lfloor t/\dd\rfloor\dd}^{x,i}|^2\le
c(\dd+W_{t,\dd})+16L^2\dd|Y_t^{x,i}|^2.
\end{equation}
By applying It\^o's formula, for  any $\rr>0$ and $p\in(0,1\wedge
p_0)$, it follows from \eqref{r5}  and \eqref{eq10} that
\begin{equation*}
\begin{split}
&\e^{\rr
t}\E((1+|Y_t^{x,i}|^2)^{p/2}\xi_{\LL_t^i}^{(p)})\\
&\le(1+|x|^2)^{p/2}\xi_i^{(p)}+\E\int_0^t\e^{\rr
s}\{(\rr\xi_{\LL_s^i}^{(p)}+(Q\xi^{(p)})(\LL_s^i))(1+|Y_s^{x,i}|^2)^{p/2}\\
&\quad+\ff{p}{2}(1+|Y_s^{x,i}|^2)^{(p-2)/2}(2\<Y_s^{x,i},b(\bar
Y_{\lfloor
s/\dd\rfloor\dd}^{x,i},\LL_{\lfloor s/\dd\rfloor\dd}^i)\>+\|\si\|^2)\xi_{\LL_s^i}^{(p)}\}\d s\\
&\le(1+|x|^2)^{p/2}\xi_i^{(p)}+c\e^{\rr t}+\E\int_0^t\e^{\rr
s}\{\rr\xi_{\LL_s^i}^{(p)}+(Q\xi^{(p)})(\LL_s^i)+\ff{p}{2}\bb_{\LL_s^i}\xi_{\LL_s^i}^{(p)}\}(1+|Y_s^{x,i}|^2)^{p/2}\d s\\
&\quad+\E\int_0^t\e^{\rr s}(1+|Y_s^{x,i}|^2)^{(p-2)/2}\{\Theta_1(s)+\Theta_2(s)\}\d s\\
&\le c(1+|x|^p+\e^{\rr t})-(\eta_p-\rr)\E\int_0^t\e^{\rr
s}(1+|Y_s^{x,i}|^2)^{p/2}\xi_{\LL_s^i}^{(p)}\d
s\\
&\quad+\E\int_0^t\e^{\rr
s}(1+|Y_s^{x,i}|^2)^{(p-2)/2}\{\Theta_1(s)+\Theta_2(s)\}\d s,
\end{split}
\end{equation*}
where
\begin{equation*}
\begin{split}
\Theta_1(t)&:=p\<Y_t^{x,i}-\bar Y_{\lfloor
t/\dd\rfloor\dd}^{x,i},b(\bar Y_{\lfloor
t/\dd\rfloor\dd}^{x,i},\LL_{\lfloor
t/\dd\rfloor\dd}^i)\>\xi_{\LL_t^i}^{(p)}+\ff{p}{2}\bb_{\LL_t^i}|\bar
Y_{\lfloor
t/\dd\rfloor\dd}^{x,i}-Y_t^{x,i}|^2\xi_{\LL_t^i}^{(p)}\\
&\quad+p\bb_{\LL_t^i}\<Y_t^{x,i},\bar Y_{\lfloor
t/\dd\rfloor\dd}^{x,i}-Y_t^{x,i}\>\xi_{\LL_t^i}^{(p)},\\
\end{split}
\end{equation*}
and
\begin{equation*}
\begin{split}
 \Theta_2(t)&:=\ff{p}{2}(\bb_{\LL_{{\lfloor
t/\dd\rfloor\dd}}^i}-\bb_{\LL_t^i})|\bar Y_{\lfloor
t/\dd\rfloor\dd}^{x,i}|^2\xi_{\LL_t^i}^{(p)}.
\end{split}
\end{equation*}
By the fundamental inequality: $a^\nu b^{1-\nu}\le\nu a+(1-\nu)b$
with $a,b>0$ and $\nu\in(0,1),  $  \eqref{eq6}, \eqref{d5} and
\eqref{d6} yield that
\begin{equation}\label{*13}
\begin{split}
\Theta_1(t) &\le \ff{p}{2}\{
(\bb_0+(1+\bb_0)(\ss\dd)^{-1}\}|Y_t^{x,i}-\bar
Y_{\lfloor t/\dd\rfloor\dd}^{x,i}|^2\xi_{\LL_t^i}^{(p)}\\
&\quad+\ff{p\bb_0}{2}\ss\dd|Y_t^{x,i}|^2\xi_{\LL_t^i}^{(p)}+\ff{p}{2}\ss\dd|b(\bar
Y_{\lfloor t/\dd\rfloor\dd}^{x,i},\LL_{\lfloor
t/\dd\rfloor\dd})|^2\xi_{\LL_t^i}^{(p)}\\
&\le c(1+W_{t,\dd})+c\{
(\bb_0+(1+\bb_0)(\ss\dd)^{-1}\}(\dd+W_{t,\dd})\xi_{\LL_t^i}^{(p)}\\
&\quad+p\{\bb_0+4L^2(3+4\bb_0)\}\ss\dd|Y_t^{x,i}|^2\xi_{\LL_t^i}^{(p)}.
\end{split}
\end{equation}
For any $t\le\dd$, due to $q_{ii}<0$ one has
\begin{equation*}
\P(\LL_t^i\neq\LL_0^i=i)=1-\P(\LL_t^i=\LL_0^i)\le1-\e^{q_{ii}t}\le1-\e^{q_{ii}\dd}\le-q_{ii}\dd\le
q_0\dd.
\end{equation*}
This further gives that
\begin{equation}\label{T4}
\begin{split}
\E(|Y_{\lfloor t/\dd\rfloor\dd}^{x,i}|^p{\bf1}_{\LL_{\lfloor
t/\dd\rfloor\dd}^i\neq\LL_t^i})&=\E(\E(|Y_{\lfloor
t/\dd\rfloor\dd}^{x,i}|^p{\bf1}_{\LL_t^i\neq\LL_{\lfloor
t/\dd\rfloor\dd}^i}|\F_{{\lfloor t/\dd\rfloor\dd}}))\\
&=\E(|Y_{\lfloor
t/\dd\rfloor\dd}^{x,i}|^p\E({\bf1}_{\LL_t^i\neq\LL_{\lfloor
t/\dd\rfloor\dd}^i}|\F_{{\lfloor t/\dd\rfloor\dd}}))\\
&=\E(|Y_{\lfloor
t/\dd\rfloor\dd}^{x,i}|^p\E({\bf1}_{\LL_t^i\neq\LL_{\lfloor
t/\dd\rfloor\dd}^i}|\LL_{{\lfloor t/\dd\rfloor\dd}}^i))\\
&\le q_0\dd\E|Y_{\lfloor t/\dd\rfloor\dd}^{x,i}|^p,
\end{split}
\end{equation}
where we have used that $\{W_t\}_{t\ge0}$ is independent of
$\{\LL_t\}_{t\ge0}$. Furthermore, in light of \eqref{d5}, \eqref{d6}
and \eqref{T4}, it follows that
\begin{equation}\label{*14}
\begin{split}
&\E\int_0^t\e^{\rr s}(1+|Y_s^{x,i}|^2)^{(p-2)/2}\Theta_2(s)\d s\\
&\le p\bb_0\E\int_0^t\e^{\rr s}(1+|Y_s^{x,i}|^2)^{(p-2)/2}|\bar
Y_{\lfloor s/\dd\rfloor\dd}^{x,i}|^2{\bf1}_{\LL_s^i\neq\LL_{\lfloor
s/\dd\rfloor\dd}^i}\xi_{\LL_s^i}^{(p)}\d s\\
&\le c\e^{\rr t}+4p\bb_0\hat{\xi}_0\E\int_0^t\e^{\rr s}(1+|\bar
Y_{\lfloor
s/\dd\rfloor\dd}^{x,i}|^2)^{p/2}{\bf1}_{\LL_s^i\neq\LL_{\lfloor
s/\dd\rfloor\dd}^i}\d s\\
&\quad+4p\bb_0\E\int_0^t\e^{\rr s}(1+|Y_s^{x,i}-\bar
Y^{x,i}_{\lfloor
s/\dd\rfloor\dd}|^2)^{p/2}{\bf1}_{\LL_s^i\neq\LL_{\lfloor
s/\dd\rfloor\dd}^i}\xi_{\LL_s^i}^{(p)}\d s\\
&\le c\e^{\rr
t}+4^{(2+p)/2}p\bb_0(4^{p/2}L^p+q_0\hat{\xi}_0\bar\xi_0)\dd^{p/2}\int_0^t\e^{\rr
s}\E((1+|Y_s^{x,i}|^2)^{p/2}\xi_{\LL_s^i}^{(p)})\d s.
\end{split}
\end{equation}
Consequently, according to \eqref{*13} and \eqref{*14}, we arrive at
\begin{equation*}
\begin{split}
\e^{\rr t}\E((1+|Y_t^x|^2)^{p/2}\xi_{\LL_t^i}^{(p)}) &\le
c(1+|x|^p+\e^{\rr t})-(\eta_p-\rr-\aa\dd^{p/2})\int_0^t\e^{\rr
s}\E((1+|Y_s|^2)^{p/2}\xi_{\LL_s^i}^{(p)})\d s,
\end{split}
\end{equation*}
where $\aa>0$ is defined in \eqref{*11}. Taking
$\rr=\eta_p-\aa\dd^{p/2}>0$ due to \eqref{*12} leads to
 \eqref{w}.

\smallskip

\noindent ({\bf ii})Uniqueness of Invariant Measure. By checking the
second part of argument for Theorem \ref{existence}, we need only to
show that
\begin{equation}\label{T6}
\begin{split}
\E|\bar Y_{k\dd}^{x,i}-\bar Y_{k\dd}^{y,i}|^p\le c|x-y|^p\e^{-\eta_p
t},~~~x,y\in\R^n.
\end{split}
\end{equation}
For $\dd\in(0,1)$ such that \eqref{*12} holds, note that
\begin{equation}\label{d8}
\begin{split}
|\bar Y_{\lfloor t/\dd\rfloor\dd}^{x,i}-\bar Y_{\lfloor
t/\dd\rfloor\dd}^{y,i}|^2\le4|Y_t^{x,i}-Y_t^{y,i}|^2,
\end{split}
\end{equation}
and
\begin{equation}\label{d2}
|Y_t^{x,i}-Y_t^{y,i}-(\bar Y_{\lfloor t/\dd\rfloor\dd}^{x,i}-\bar
Y_{\lfloor t/\dd\rfloor\dd}^{y,i})|^2
\le4L^2\dd|Y_t^{x,i}-Y_t^{y,i}|^2.
\end{equation}
For arbitrary $\vv>0$, $\rr>0$, and $p\in(0,1\wedge p_0)$, by the
It\^o formula and ({\bf H}), it follows from \eqref{eq10}  that
\begin{equation*}
\begin{split}
\E&(\e^{\rr t}(\vv+|Y_t^{x,i}-Y_t^{y,i}|^2)^{p/2}\xi_{\LL_t^i}^{(p)})\\
&\le c(\vv^{p/2}+|x-y|^p)+\E\int_0^t\e^{\rr
s}(\vv+|Y_s^{x,i}-Y_s^{y,i}|^2)^{(p-2)/2}\{(\vv+|Y_s^{x,i}-Y_s^{y,i}|^2)(\rr\xi_{\LL_s^i}^{(p)}+(Q\xi^{(p)})(\LL_s^i))\\
&\quad+p\<Y_s^{x,i}-Y_s^{y,i},b(\bar Y_{\lfloor
s/\dd\rfloor\dd}^{x,i}, \LL_{\lfloor s/\dd\rfloor\dd}^i)-b(\
Y_{\lfloor s/\dd\rfloor\dd}^{y,i},
\LL_{\lfloor s/\dd\rfloor\dd}^i)\>\xi_{\LL_s^i}^{(p)}\d s\\
&\le c(\vv^{p/2}+|x-y|^p)+\E\int_0^t\e^{\rr
s}(\vv+|Y_s^{x,i}-Y_s^{y,i}|^2)^{p/2}\{\rr\xi_{\LL_s^i}^{(p)}+(Q\xi^{(p)})(\LL_s^i)+\ff{p}{2}\bb_{\LL_s^i}\xi_{\LL_s^i}^{(p)}\}\d s\\
&\quad+\E\int_0^t\e^{\rr
s}(\vv+|Y_s^{x,i}-Y_s^{y,i}|^2)^{(p-2)/2}\{\Upsilon_1(s)+\Upsilon_2(s)\}\d
s\\
&\le c(\vv^{p/2}+|x-y|^p)-(\eta_{p}-\rr)\E\int_0^t\e^{\rr
s}(\vv+|Y_s^{x,i}-Y_s^{y,i}|^2)^{p/2}\xi_{\LL_s^i}^{(p)}\d s\\
\end{split}
\end{equation*}
\begin{equation*}
+\E\int_0^t\e^{\rr
s}(\vv+|Y_s^{x,i}-Y_s^{y,i}|^2)^{(p-2)/2}\{\Upsilon_1(s)+\Upsilon_2(s)\}\d
s,~~~~~~~~~~~~~~~~~~~~~~~~~~~~~~~~~~~~~~~~~~~~
\end{equation*}
where
\begin{equation*}
\begin{split}
\Upsilon_1(t)&:=p\<Y_t^{x,i}-Y_t^{y,i}-(\bar Y_{\lfloor
t/\dd\rfloor\dd}^{x,i}-\bar Y_{\lfloor
t/\dd\rfloor\dd}^{y,i}),b(\bar Y_{\lfloor t/\dd\rfloor\dd}^{x,i},
\LL_{\lfloor t/\dd\rfloor\dd}^i)-b(\bar Y_{\lfloor
t/\dd\rfloor\dd}^{y,i}, \LL_{\lfloor
t/\dd\rfloor\dd}^i)\>\xi_{\LL_t^i}^{(p)},\\
&\quad+\ff{p}{2}\bb_{\LL_t^i}|\bar Y_{\lfloor
t/\dd\rfloor\dd}^{x,i}-\bar Y_{\lfloor
t/\dd\rfloor\dd}^{y,i}-(Y_t^{x,i}-
Y_t^{y,i})|^2\xi_{\LL_t^i}^{(p)},\\
&\quad+p\bb_{\LL_t^i}\<Y_t^{x,i}- Y_t^{y,i},\bar Y_{\lfloor
t/\dd\rfloor\dd}^{x,i}-\bar Y_{\lfloor t/\dd\rfloor\dd}^{y,i}-(Y_t^{x,i}- Y_t^{y,i})\>\xi_{\LL_t^i}^{(p)}\\
\Upsilon_2(t)&:=\ff{p}{2}(\bb_{\LL_{\lfloor
t/\dd\rfloor\dd}^i}-\bb_{\LL_t}^i)|\bar Y_{\lfloor
t/\dd\rfloor\dd}^{x,i}-\bar Y_{\lfloor
t/\dd\rfloor\dd}^{y,i}|^2\xi_{\LL_t^i}^{(p)}.
\end{split}
\end{equation*}
Observe from \eqref{eq5}, \eqref{d8} and \eqref{d2} that
\begin{equation*}
\begin{split}
\Upsilon_1(t) &\le
\ff{p}{2}\{\bb_0+(1+\bb_0)(\ss\dd)^{-1}\}|Y_t^{x,i}-Y_t^{y,i}-(\bar
Y_{\lfloor t/\dd\rfloor\dd}^{x,i}-\bar Y_{\lfloor
t/\dd\rfloor\dd}^{y,i})|^2\xi_{\LL_t^i}^{(p)}\\
&\quad+\ff{pL^2}{2}\ss\dd|\bar Y_{\lfloor
t/\dd\rfloor\dd}^{x,i}-\bar Y_{\lfloor
t/\dd\rfloor\dd}^{y,i}|^2\xi_{\LL_t^i}^{(p)}+\ff{p\bb_0}{2}\ss\dd|Y_t^{x,i}-Y_t^{y,i}|^2\xi_{\LL_t^i}^{(p)}\\
&\le
p\{4(1+\bb_0)L^2+\bb_0\}\ss\dd|Y_t^{x,i}-Y_t^{y,i}|^2\xi_{\LL_t^i}^{(p)}.
\end{split}
\end{equation*}
As \eqref{*14} was done, by virtue of \eqref{eq5}, \eqref{d8}, and
\eqref{d2}, we deduce that
\begin{equation*}
\begin{split}
&\E\int_0^t\e^{\rr
s}(\vv+|Y_s^{x,i}-Y_s^{y,i}|^2)^{(p-2)/2}\Upsilon_2(s)\d
s\\
&\le
c\vv^{p/2}+4^{(2+p)/2}p\bb_0(q_0\hat\xi_0\bar\xi_0+L^{p})\dd^{p/2}\E\int_0^t\e^{\rr
s}(\vv+|Y_s^{x,i}-Y_s^{y,i}|^2)^{p/2}\xi_{\LL_s^i}^{(p)}\d s.
\end{split}
\end{equation*}
As a consequence, we arrive at
\begin{equation*}
\begin{split}
\E&(\e^{\rr t}(\vv+|Y_t^{x,i}-Y_t^{y,i}|^2)^{p/2}\xi_{\LL_t^i}^{(p)})\\
&\le c\vv^{p/2}-(\eta_{p}-\rr-\aa\dd^{p/2})\E\int_0^t\e^{\rr
s}(\vv+|Y_s^{x,i}-Y_s^{y,i}|^2)^{p/2}\xi_{\LL_s^i}^{(p)}\d s,
\end{split}
\end{equation*}
where $\aa>0$ is defined as in \eqref{*11}. Then \eqref{T6} follows
by choosing $\rr=\eta_{p}-\aa\dd^{p/2}>0$ own to \eqref{*12} and
taking $\vv\downarrow0$.
\end{proof}

\begin{rem}\label{re3.1}
{\rm Let us reexamine the Example \ref{Exa}. Note that   ({\bf H}),
 \eqref{eq8}, and \eqref{eq5} hold  with     $\bb_1=1$,
$\bb_2=-\ff{1}{2}$,  $\gamma\in(0,2)$, and  $L=1$  respectively. So,
 $(\bar Y_{k\dd}^{x,i},\LL_{k\dd}^i)$, associated with \eqref{*5} and
\eqref{T2}, admits a unique invariant measure $\pi^\dd\in \mathcal
{P}(\R\times\mathbb{S})$
   whenever the stepsize
$
 \dd<(1/16)\wedge(\eta_p/\aa)^{2/p}
$
for $\aa= p\{29+4^{(2+p)/2}(4^{p/2}+4\hat{\xi}_0\bar\xi_0)\}$. }
\end{rem}

The following theorem reveals that   numerical invariant measure
$\pi^\dd$ converges in the  Wasserstein distance to the underlying
one.

\begin{thm}\label{W_p}
{\rm Under the assumptions of Theorem \ref{boun},  there exists $c>0$ such that
\begin{equation*}
W_p(\pi,\pi^\dd)\le c\dd^{p/2},~~~~p<1\wedge p_0,
\end{equation*}
where $p_0>0$ is introduced in the argument of Theorem
\ref{existence}. }
\end{thm}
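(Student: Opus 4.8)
The plan is to compare the two invariant measures through the triangle inequality for $W_p$, splitting the distance into an ergodic (contraction) contribution that is driven to zero by running the dynamics for a long time, and a discretization contribution controlled by a strong error estimate that must be \emph{uniform in the time horizon}. Since $\pi$ is invariant for the true semigroup and $\pi^\dd$ for the Euler--Maruyama one, for every $k\ge1$ one has $\pi=\pi P_{k\dd}$ and $\pi^\dd=\pi^\dd P_{k\dd}^\dd$, so that
\begin{equation*}
W_p(\pi,\pi^\dd)=W_p(\pi P_{k\dd},\pi^\dd P_{k\dd}^\dd)\le W_p(\pi P_{k\dd},\pi^\dd P_{k\dd})+W_p(\pi^\dd P_{k\dd},\pi^\dd P_{k\dd}^\dd).
\end{equation*}
I would bound the two terms separately and let $k\to\8$ at the very end.

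For the first term, both measures are pushed forward by the \emph{continuous-time} semigroup, so I would exploit the exponential ergodicity already established inside the proof of Theorem \ref{exa}. The coupling argument there produces the pointwise estimate $W_p(\dd_{(x,i)}P_t,\dd_{(y,j)}P_t)\le c\,\e^{-\rr t}(1+|x|^p+|y|^p)$, with $\rr=\frac{(q-1)\theta}{2q}\wedge\frac{\eta_p}{2}$. Integrating this against an optimal coupling of $\pi$ and $\pi^\dd$ (using the convexity of $W_p$ already invoked in \eqref{T}) and noting that both measures carry a finite $p$-th moment --- $\pi$ by \eqref{*1} and $\pi^\dd$ by \eqref{w}, the latter bound being uniform in $\dd$ --- gives $W_p(\pi P_{k\dd},\pi^\dd P_{k\dd})\le c\,\e^{-\rr k\dd}$, which vanishes as $k\to\8$.

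For the second term I would use that at the grid points the true process and the scheme share the same Markov chain $\{\LL_t\}_{t\ge0}$, so the discrete component contributes nothing to the coupling cost; together with $Y_{k\dd}^{x,i}=\bar Y_{k\dd}^{x,i}$ this yields
\begin{equation*}
W_p(\pi^\dd P_{k\dd},\pi^\dd P_{k\dd}^\dd)\le\int_{\R^n\times\SS}W_p(\dd_{(x,i)}P_{k\dd},\dd_{(x,i)}P_{k\dd}^\dd)\,\pi^\dd(\d x\times\{i\})\le\int_{\R^n\times\SS}\E|X_{k\dd}^{x,i}-\bar Y_{k\dd}^{x,i}|^p\,\pi^\dd(\d x\times\{i\}).
\end{equation*}
The crucial point --- and the main obstacle --- is a strong error bound that is uniform in the horizon, namely $\sup_{k\ge0}\E|X_{k\dd}^{x,i}-\bar Y_{k\dd}^{x,i}|^p\le c(1+|x|^p)\dd^{p/2}$ with $c$ independent of $k$. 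I would obtain it by applying It\^o's formula to $\e^{(\eta_p-\aa\dd^{p/2})t}(\vv+|X_t^{x,i}-Y_t^{x,i}|^2)^{p/2}\xi_{\LL_t^i}^{(p)}$, exactly along the lines of the proof of Theorem \ref{boun}: assumption (\textbf{H}) supplies the dissipativity, the one-step distances \eqref{d5}--\eqref{d6} control the replacement of $X_s,Y_s$ by the frozen value $\bar Y_{\lf s/\dd\rf\dd}^{x,i}$, and the chain-mismatch remainders (the analogues of $\Theta_2$ and $\Upsilon_2$) are absorbed via $\P(\LL_s^i\neq\LL_{\lf s/\dd\rf\dd}^i)\le q_0\dd$. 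The decisive feature is that the strict positivity of $\eta_p-\aa\dd^{p/2}$, guaranteed by \eqref{*12}, turns the resulting Gronwall-type inequality into one whose constant does \emph{not} grow with $t$; this is precisely where the ``averaging condition'' \eqref{eq8} is indispensable, for without it the error would accumulate like $\e^{cT}$ and ruin the rate.

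Combining the two bounds and using the uniform-in-$\dd$ moment estimate $\int_{\R^n\times\SS}(1+|x|^p)\pi^\dd(\d x\times\{i\})\le c$ coming from \eqref{w}, I arrive at $W_p(\pi,\pi^\dd)\le c\,\e^{-\rr k\dd}+c\,\dd^{p/2}$ for every $k\ge1$, with $c$ independent of both $k$ and $\dd$. Letting $k\to\8$ annihilates the first summand and leaves $W_p(\pi,\pi^\dd)\le c\,\dd^{p/2}$, which is the assertion.
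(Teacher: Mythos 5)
Your architecture is genuinely different from the paper's, and parts of it are sound. The paper anchors a three-term triangle inequality at a fixed initial point, $W_p(\pi,\pi^\dd)\le W_p(\pi,\dd_{(x,i)}P_{k\dd})+W_p(\dd_{(x,i)}P_{k\dd},\dd_{(x,i)}P_{k\dd}^\dd)+W_p(\dd_{(x,i)}P_{k\dd}^\dd,\pi^\dd)$, kills the outer terms by the exponential contractions of Theorems \ref{exa} and \ref{boun} for $k$ sufficiently large, and bounds the middle term by the \emph{finite-horizon} strong convergence result of \cite[Theorem 3.1]{YM04} at that fixed $k$. You instead use invariance to reduce to two terms and replace the finite-horizon estimate by a \emph{horizon-uniform} one. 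Your treatment of the first term is correct (contraction of $P_t$ from the proof of Theorem \ref{exa}, integrated against a coupling, using the uniform $p$-th moments coming from \eqref{*1} and \eqref{w}), and your diagnosis that uniformity in the horizon is the crux is a real insight: it is precisely the point on which the paper's own proof is terse, since its $k$ must grow like $\log(1/\dd)$ for the ergodic terms to be $O(\dd^{p/2})$ while the constant imported from \cite{YM04} depends on the horizon $k\dd$.

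The gap is in the key lemma you rely on, $\sup_{k\ge0}\E|X_{k\dd}^{x,i}-\bar Y_{k\dd}^{x,i}|^p\le c(1+|x|^p)\dd^{p/2}$, which is not obtained "along the lines of Theorem \ref{boun}". The Gronwall arguments in Theorem \ref{boun} close because the prefactor $(1+|Y_s^{x,i}|^2)^{(p-2)/2}$ (resp. $(\vv+|Y_s^{x,i}-Y_s^{y,i}|^2)^{(p-2)/2}$) and the corrections $\Theta_1,\Theta_2$ (resp. $\Upsilon_1,\Upsilon_2$) involve the \emph{same} quantity: the pathwise bounds \eqref{d5}--\eqref{d6} (resp. \eqref{d8}--\eqref{d2}) allow every correction to be recombined into $\dd^{p/2}$ times the $p/2$-power of that quantity, so only $p$-th moments ever appear. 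In your comparison of $X_t$ with $Y_t$ this matching fails: the corrections involve the one-step displacement $|Y_s-\bar Y_{\lf s/\dd\rf\dd}^{x,i}|$ and the chain-mismatch term $(1+|\bar Y_{\lf s/\dd\rf\dd}^{x,i}|)\mathbf{1}_{\{\LL_s^i\neq\LL_{\lf s/\dd\rf\dd}^i\}}$, which are in no way controlled by $|X_s-Y_s|$. After bounding the prefactor $(\vv+|X_s-Y_s|^2)^{(p-1)/2}$ by $\vv^{(p-1)/2}$ (the Young-type splitting $(\vv+|X_s-Y_s|^2)^{(p-1)/2}Z\le\epsilon(\vv+|X_s-Y_s|^2)^{p/2}+C_\epsilon Z^p$ is false for $p<1$: take $Z$ large), the inhomogeneous terms left over are \emph{linear} in $|\bar Y_{\lf s/\dd\rf\dd}^{x,i}|$, so closing the inequality requires $\sup_{k}\E|\bar Y_{k\dd}^{x,i}|$ --- or second moments, if one uses the $\ss\dd$-weighted Cauchy--Schwarz splitting of \eqref{*13} --- uniformly in time. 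But under the averaging condition \eqref{eq8} alone, the uniform bounds \eqref{*1} and \eqref{w} hold only for moments of order $p<1\wedge p_0$, and $p_0$ (essentially $\min_{i:\bb_i>0}(-2q_{ii}/\bb_i)$ by Lemma \ref{P-F}) can be $\le 1$; in that case $\sup_k\E|\bar Y_{k\dd}^{x,i}|$ may be infinite, and your Gronwall inequality cannot be closed. So the horizon-uniform error bound needs either additional hypotheses or a genuinely different argument; as it stands, this step fails exactly where the hard work lies, and it is the step the paper sidesteps by invoking the finite-horizon result of \cite{YM04} at a fixed $k$.
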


\begin{proof}
For any $p<1\wedge p_0$, note that
\begin{equation*}
W_p(\dd_{(x,i)}P_{k\dd},\pi)\le\int_{\R^n\times\mathbb{S}}\pi(\d
y\times\{j\})W_p(\dd_{(x,i)}P_{k\dd},\dd_{(y,j)}P_{k\dd}),
\end{equation*}
and
\begin{equation*}
W_p(\dd_{(x,i)}P_{k\dd}^\dd,\pi^\dd)\le\int_{\R^n\times\mathbb{S}}\pi^\dd(\d
y\times\{j\})W_p(\dd_{(x,i)}P_{k\dd}^\dd,\dd_{(y,j)}P_{k\dd}^\dd).
\end{equation*}
Then, by a close inspection of arguments for Theorem \ref{exa} and
\ref{existence}, for $\dd\in(0,1)$such that \eqref{*12}, there exist
$k>0$ sufficiently large and $c_1>0$ such that
\begin{equation*}
W_p(\dd_{(x,i)}P_{k\dd},\pi)+W_p(\dd_{(x,i)}P_{k\dd}^\dd,\pi^\dd)\le
c_1\dd^{p/2}.
\end{equation*}
Moreover, for fixed $k>0$ above, it follows from  \cite[Theorem
3.1]{YM04} that
\begin{equation*}
W_p(\dd_{(x,i)}P_{k\dd},\dd_{(x,i)}P_{k\dd}^\dd)\le c_2\dd^{p/2}
\end{equation*}
for some $c_2>0.$ Then the desired assertion follows from the
triangle inequality.
\end{proof}

\subsection{Multiplicative Noise Case}\label{sec3.1}
In the previous subsection, we discuss existence and uniqueness of
numerical invariant measures for the RSDP  \eqref{eq1} and
\eqref{love} with additive noise. While, in this subsection, we turn
to study the case of multiplicative noise. We further assume that
\begin{equation}\label{*6}
\min_{i\in\mathbb{S},\bb_i>0}\{-q_{ii}/\bb_i,\}>1.
\end{equation}
Under this condition, by Lemma \ref{P-F} (ii) we can take $p=2$ in \eqref{r4}.
Set
\begin{equation*}
\mbox{diag}(\bb):=\mbox{diag}(\bb_1,\cdots,\bb_N),~~~~
Q_2:=Q+\mbox{diag}(\bb),~~\eta_2:=-\max_{\gamma\in\mbox{spec}(
Q_2)}\mbox{Re}\gamma,
\end{equation*}
where $Q$ is the $Q$-matrix of $\{\Lambda_t\}_{t\ge0},$ and
$\mbox{spec}(Q_2)$ denotes the spectrum of $Q_2.$ Following an
argument of \eqref{eq10}, we can deduce from \eqref{eq8} and
\eqref{*6}
 that
 there exists an
eigenvector
 $\xi^{(2)}\gg{\bf0}$  of $Q_2$  with
eigenvalue $-\eta_2<0$  such that
\begin{equation}\label{T7}
Q_{2}\xi^{(2)}=-\eta_{ 2}\xi^{(2)}\ll{\bf0}.
\end{equation}
Set
\begin{equation}\label{*8}
\hat\xi_2:=\max_{i\in\mathbb{S}}\xi_i^{(2)},
~~\bar\xi_2:=\min_{i\in\mathbb{S}}(\xi_i^{(2)})^{-1},~~
 \bb:=\{(1+12q_0)\bb_0+8L^2(5+6\bb_0)\}\hat\xi_2\bar\xi_2,
\end{equation}
where $q_0,\bb_0>0$ are defined as in \eqref{T8}.

\smallskip

Our main result in this subsection is presented as follows.

\begin{thm}\label{existence}
{\rm  Let $N<\8$, and assume further that ({\bf H}), \eqref{eq8},
\eqref{eq5}, and \eqref{*6} hold. For
\begin{equation}\label{c2}
 \dd<(1/(32L^2))\wedge(\eta_2/\bb)^2,
 \end{equation}
where $\bb>0$ is given in \eqref{*8},   $(\bar
Y_{k\dd}^{x,i},\LL_{k\dd}^i)$  admits a unique measure
$\pi^\dd\in\mathcal {P}(\R^n\times\mathbb{S}).$ }
\end{thm}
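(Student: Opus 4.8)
The plan is to follow the two-part scheme of Theorem \ref{boun} (existence via a uniform moment bound plus tightness, uniqueness via an exponential contraction coupled with the chain), the essential new feature being that the exponent is now fixed at $p=2$. Condition \eqref{*6} is precisely the hypothesis of Lemma \ref{P-F}(ii) that licenses the choice $p=2$ in \eqref{r4}, so that $\eta_2>0$ and the strictly positive eigenvector $\xi^{(2)}\gg{\bf0}$ of \eqref{T7} is available. Working with the quadratic Lyapunov functional $(1+|Y_t^{x,i}|^2)\xi_{\LL_t^i}^{(2)}$ is exactly what makes multiplicative noise tractable: since $x\mapsto|x|^2$ has constant Hessian, the It\^o correction produces precisely $\|\si\|^2$, which combines cleanly with the dissipativity bound \eqref{r5}, whereas a fractional exponent $p<2$ would leave a term proportional to $|\si^*Y|^2/(1+|Y|^2)$ that the linear growth \eqref{eq6} of $\si$ renders of the same order as the leading term.

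For existence I would first establish $\sup_{k\ge0}\E|\bar Y_{k\dd}^{x,i}|^2<\8$; by the Chebyshev inequality and Feller continuity this gives tightness of the empirical measures $\mu_q$, and hence an invariant measure by the Krylov--Bogolyubov argument of part ({\bf i}) of Theorem \ref{boun}. To obtain the moment bound I would apply It\^o's formula to $\e^{\rr t}(1+|Y_t^{x,i}|^2)\xi_{\LL_t^i}^{(2)}$ and use \eqref{T7} to convert the generator contribution into $-\eta_2(1+|Y_s^{x,i}|^2)\xi_{\LL_s^i}^{(2)}$. Because the coefficients are frozen at $(\bar Y_{\lf s/\dd\rf\dd}^{x,i},\LL_{\lf s/\dd\rf\dd}^i)$, the key step is to absorb the discrepancy: writing $2\<Y_s^{x,i},b(\bar Y_{\lf s/\dd\rf\dd}^{x,i})\>+\|\si(\bar Y_{\lf s/\dd\rf\dd}^{x,i})\|^2$ as the dissipative expression $2\<\bar Y_{\lf s/\dd\rf\dd}^{x,i},b(\bar Y_{\lf s/\dd\rf\dd}^{x,i})\>+\|\si(\bar Y_{\lf s/\dd\rf\dd}^{x,i})\|^2$ plus the cross term $2\<Y_s^{x,i}-\bar Y_{\lf s/\dd\rf\dd}^{x,i},b(\bar Y_{\lf s/\dd\rf\dd}^{x,i})\>$, one bounds the former by \eqref{r5} and the latter through the one-step estimates of type \eqref{d5}--\eqref{d6} and the linear growth \eqref{eq6}; the state mismatch $\bb_{\LL_{\lf s/\dd\rf\dd}^i}-\bb_{\LL_s^i}$ is handled by the switching bound $\P(\LL_s^i\neq\LL_{\lf s/\dd\rf\dd}^i)\le q_0\dd$ exactly as in \eqref{T4}. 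Collecting all remainders as $\bb\dd^{1/2}\int_0^t\e^{\rr s}\E((1+|Y_s^{x,i}|^2)\xi_{\LL_s^i}^{(2)})\d s$ plus a $c\e^{\rr t}$ term, with $\bb$ as in \eqref{*8}, the choice $\rr=\eta_2-\bb\dd^{1/2}>0$, legitimate under \eqref{c2}, closes the estimate.

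For uniqueness I would run the parallel computation on the difference process, applying It\^o's formula to $\e^{\rr t}(\vv+|Y_t^{x,i}-Y_t^{y,i}|^2)\xi_{\LL_t^i}^{(2)}$, now using the contractive dissipativity \eqref{r6} in place of \eqref{r5} and the Lipschitz bound \eqref{eq5} for the frozen-coefficient remainders, and letting $\vv\downarrow0$; this should yield $\E|\bar Y_{k\dd}^{x,i}-\bar Y_{k\dd}^{y,i}|^2\le c|x-y|^2\e^{-\eta_2 t}$, whence by Jensen's inequality the analogous $p$-th moment decay for any $p\in(0,1]$. Combining this with the finite-state coupling-time estimate $\P(\tau>t)\le\e^{-\ta t}$ of \eqref{T5} via H\"older's inequality and conditioning on $\F_\tau$, exactly as in the uniqueness half of Theorem \ref{exa}, gives $W_p(\dd_{(x,i)}P_{k\dd}^\dd,\dd_{(y,j)}P_{k\dd}^\dd)\to0$, and the Kantorovich--Rubinstein duality then forces any two invariant measures to coincide. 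I expect the main obstacle to lie in the bookkeeping of the existence step: since $\si$ grows linearly, the quadratic-variation contribution $\|\si(\bar Y_{\lf s/\dd\rf\dd}^{x,i})\|^2$ is itself of order $|Y_s^{x,i}|^2$, so every frozen-coefficient remainder must be shown to carry an explicit $\dd^{1/2}$ gain together with a factor of the Lyapunov functional; keeping all these terms of the correct order, rather than merely $O(1)$, is precisely what forces the sharper stepsize threshold \eqref{c2} and the constant $\bb$ of \eqref{*8}.
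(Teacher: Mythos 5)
Your proposal is correct and follows essentially the same route as the paper: the hypothesis \eqref{*6} invokes Lemma \ref{P-F}(ii) to fix $p=2$ with eigenvector $\xi^{(2)}\gg{\bf 0}$ in \eqref{T7}, existence comes from the quadratic Lyapunov functional with exactly the frozen-coefficient decomposition (dissipative part via \eqref{r5}, cross term via the one-step estimates and \eqref{eq6}, chain mismatch via the $q_0\dd$ switching bound \eqref{T4}) so that all remainders carry a $\bb\sqrt{\dd}$ factor and $\rr=\eta_2-\bb\sqrt{\dd}>0$ closes the bound, and uniqueness comes from the analogous $L^2$ contraction using \eqref{r6} combined with the coupling-time estimate \eqref{T5} and Kantorovich--Rubinstein duality. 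The only cosmetic difference is your $\vv$-regularization in the uniqueness step, which is unnecessary for $p=2$ since $x\mapsto|x|^2$ is smooth, but harmless.
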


\begin{proof}
The ideas of argument for Theorem \ref{existence} is analogous to
that of Theorem \ref{boun}. However, we herein give an outline of
the argument to point out some corresponding differences.

\smallskip
\noindent ({\bf i}) Existence of an Invariant Measure. To end this,
it is sufficient to show that
\begin{equation}\label{ww}
\sup_{k\ge0}\E|\bar
Y_{k\dd}^{x,i}|^2<\8,~~~(x,i)\in\R^n\times\mathbb{S}.
\end{equation}
From \eqref{eq3} and \eqref{eq6}, one has
\begin{equation*}
\begin{split}
\E |Y_t^{x,i}-\bar Y_{\lfloor t/\dd\rfloor\dd}^{x,i}|^2\le
c\dd+8L^2\dd\E|Y_{\lfloor t/\dd\rfloor\dd}^{x,i}|^2.
\end{split}
\end{equation*}
This further leads to
  \begin{equation*}
\begin{split}
\E |\bar Y_{\lfloor t/\dd\rfloor\dd}^{x,i}|^2
\le 2 \E|Y_t^{x,i}|^2+2 \E|Y_t^{x,i}-\bar Y_{\lfloor
t/\dd\rfloor\dd}^{x,i}|^2\le 2 \E|Y_t^{x,i}|^2+2(c+8L^2\E
|Y_{\lfloor t/\dd\rfloor\dd}^{x,i}|^2)\dd.
\end{split}
\end{equation*}
Hence, due to \eqref{c2},
\begin{equation}\label{d4}
\E |\bar Y_{\lfloor t/\dd\rfloor\dd}^{x,i}|^2\le
c+4\E|Y_t^{x,i}|^2~~\mbox{ and }~~\E |Y_t^{x,i}-\bar Y_{\lfloor
t/\dd\rfloor\dd}^{x,i}|^2\le c\dd+32L^2\dd\E|Y_t^{x,i}|^2.
\end{equation}
By It\^o's formula,
for any $\rr>0$, it follows from  \eqref{r5} and \eqref{T7} that
\begin{equation}\label{w2}
\begin{split}
&\e^{\rr t}\E(|Y_t^{x,i}|^2\xi_{\LL_t^i}^{(2)}) \\&=
|x|^2\xi_i^{(2)}+\E\int_0^t\e^{\rr
s}\{\rr| Y_s^{x,i}|^2\xi_{\LL_s^i}^{(2)}+|Y_s^{x,i}|^2(Q\xi^{(2)})(\LL_s^i)\\
&\quad+(2\<Y_s^{x,i},b(\bar Y_{\lfloor
s/\dd\rfloor\dd}^{x,i},\LL_{\lfloor s/\dd\rfloor\dd}^i)\>+\|\si(\bar
Y_{\lfloor s/\dd\rfloor\dd}^{x,i},\LL_{\lfloor
s/\dd\rfloor\dd}^i)\|^2)\xi_{\LL_s^i}^{(2)}\}\d s\\
&\le c(|x|^2 + \e^{\rr t})+\E\int_0^t\e^{\rr
s}\{\rr\xi_{\LL_s^i}^{(2)}+(Q\xi^{(2)})(\LL_s^i)+\bb_{\LL_s^i}\xi_{\LL_s^i}^{(2)}\}|Y_s^{x,i}|^2\d s\\
&\quad+\int_0^t\e^{\rr
s}\{\Lambda_1(s)+\Lambda_2(s)\}\d s\\
&\le c(|x|^2 + \e^{\rr t})-\int_0^t\e^{\rr s}(\eta_2-\rr)
\E(|Y_s^{x,i}|^2\xi_{\LL_s^i}^{(2)})\d s+\int_0^t\e^{\rr
s}\{\Lambda_1(s)+\Lambda_2(s)\}\d s,
\end{split}
\end{equation}
where
\begin{equation}\label{T9}
\begin{split}
\Lambda_1(t)&:=2\E(\<Y_t^{x,i}-\bar Y_{\lfloor
t/\dd\rfloor\dd}^{x,i},b(\bar Y_{\lfloor
t/\dd\rfloor\dd}^{x,i},\LL_{\lfloor
t/\dd\rfloor\dd}^i)\>\xi_{\LL_t^i}^{(2)})+\E(\bb_{\LL_t^i}|Y_{\lfloor
t/\dd\rfloor\dd}^{x,i}-Y_t^{x,i}|^2\xi_{\LL_t^i}^{(2)})\\
&\quad+2\E(\bb_{\LL_t^i}\<Y_t^{x,i},Y_{\lfloor
t/\dd\rfloor\dd}^{x,i}-Y_t^{x,i}\>\xi_{\LL_t^i}^{(2)}),\\
\Lambda_2(t)&:=\E((\bb_{\LL_{\lfloor
t/\dd\rfloor\dd}^i}-\bb_{\LL_t^i})|Y_{\lfloor
t/\dd\rfloor\dd}^{x,i}|^2\xi_{\LL_t^i}^{(2)}).
\end{split}
\end{equation}
From \eqref{eq6}  and \eqref{d4},  we derive  that
\begin{equation*}
\begin{split}
\Lambda_1(t)
&\le\{\bb_0+(1+\bb_0)(\ss\dd)^{-1})\}\hat\xi_2\E|Y_t^x-Y_{\lfloor
t/\dd\rfloor\dd}^x|^2\\
&\quad+\hat\xi_2\ss\dd\E|b(\bar Y_{\lfloor
t/\dd\rfloor\dd}^x,\LL_{\lfloor
t/\dd\rfloor\dd}^i)|^2+\bb_0\hat\xi_2\ss\dd\E|Y_t^x|^2\\
&\le
c+\{\bb_0+8L^2(5+4\bb_0)\}\hat\xi_2\bar\xi_2\ss\dd\E(|Y_t^x|^2\xi_{\LL_t^i}^{(2)}).
\end{split}
\end{equation*}
Next, by \eqref{T4} with $p=2$ and \eqref{d4}, we have
\begin{equation*}
\begin{split}
\Lambda_2(t)&\le 2\bb_0\hat\xi_2\E(|Y_{\lfloor
t/\dd\rfloor\dd}^{x,i}|^2{\bf1}_{\LL_t^i\neq\LL_{\lfloor
t/\dd\rfloor\dd}^i})\le
c+8q_0\bb_0\hat\xi_2\bar\xi_2\dd\E(|Y_t^{x,i}|^2\xi_{\LL_t^i}^{(2)}).
\end{split}
\end{equation*}
Thus, we arrive at
\begin{equation*}
\begin{split}
&\e^{\rr t}\E((1+|Y_t^{x,i}|^2)^{p/2}\xi_{\LL_t^i}^{(2)})\le
c(|x|^2+\e^{\rr t})-\E\int_0^t\e^{\rr
s}(\eta_2-\rr-\bb\sqrt{\dd})(1+|Y_s^{x,i}|^2)^{p/2}\xi_{\LL_s^i}^{(2)}\d
s.
\end{split}
\end{equation*}
Taking $\rr=\eta_2-\aa\sqrt{\dd}>0$ thanks to \eqref{c2} yields the
desired assertion \eqref{ww}.

\smallskip

\noindent({\bf ii}) Uniqueness of Invariant Measure.  We need only
to show that
\begin{align}\label{w1}
\E(Y_t^{x,i}-Y_t^{y,i}|^2)\le c\e^{-\rr t}  |x-y|^2.
\end{align}
For  $\dd\in(0,1)$ such that \eqref{c2}, we deduce from \eqref{eq3}
and \eqref{eq5} that
\begin{equation}\label{d7}
\begin{split}
\E |\bar Y_{\lfloor t/\dd\rfloor\dd}^{x,i}-\bar Y_{\lfloor
t/\dd\rfloor\dd}^{y,i}|^2\le6\E|Y_t^{x,i}-Y_t^{y,i}|^2,
\end{split}
\end{equation}
and
\begin{equation}\label{d1}
\begin{split}
\E|Y_t^{x,i}-Y_t^{y,i}-(\bar Y_{\lfloor t/\dd\rfloor\dd}^{x,i}-\bar
Y_{\lfloor t/\dd\rfloor\dd}^{y,i})|^2&\le 2
4L^2\dd\E|Y_t^{x,i}-Y_t^{y,i}|^2.
\end{split}
\end{equation}
For any $\rr>0$, by It\^o's formula  and \eqref{r6}, it follows from
\eqref{T7} that
\begin{equation*}
\begin{split}
&\E(\e^{\rr
t}|Y_t^{x,i}-Y_t^{y,i}|^2\xi_{\LL_t^i}^{(2)})\\
&\le|x-y|^2\xi_i^{(2)}-(\eta_2-\rr)\int_0^t\e^{\rr s}
\E(|Y_s^{x,i}-Y_s^{y,i}|^2\xi_{\LL_s^i}^{(2)})\d s+\int_0^t\e^{\rr
s}\Gamma(s)\d s,
\end{split}
\end{equation*}
where
\begin{equation*}
\begin{split}
\Gamma(t) &=2\E(\<Y_t^{x,i}-Y_t^{y,i}-( \bar Y_{\lfloor
t/\dd\rfloor\dd}^{x,i}-\bar Y_{\lfloor t/\dd\rfloor\dd}^{y,i}),
b(\bar Y_{\lfloor t/\dd\rfloor\dd}^{x,i}, \LL_{\lfloor
t/\dd\rfloor\dd}^i)-b(\bar Y_{\lfloor t/\dd\rfloor\dd}^{y,i},
\LL_{\lfloor
t/\dd\rfloor\dd}^i)\>\xi_{\LL_t^i}^{(2)})\\
 &\quad+2\E(\<Y_t^{x,i}-Y_t^{y,i},\bar Y_{\lfloor t/\dd\rfloor\dd}^{x,i}-\bar
Y_{\lfloor
t/\dd\rfloor\dd}^{y,i}-(Y_t^x-Y_t^y)\>\bb_{\LL_t^i}\xi_{\LL_t^i}^{(2)})\\
 &\quad+\E(|\bar Y_{\lfloor t/\dd\rfloor\dd}^{x,i}-\bar Y_{\lfloor
t/\dd\rfloor\dd}^{y,i}-(Y_t^{x,i}-Y_t^{y,i})|^2\bb_{\LL_t^i}\xi_{\LL_t^i}^{(2)})\\
&\quad+\E((\bb_{\LL_{{\lfloor
t/\dd\rfloor\dd}}^i}-\bb_{\LL_t^i})|\bar Y_{\lfloor
t/\dd\rfloor\dd}^{x,i}-\bar Y_{\lfloor
t/\dd\rfloor\dd}^{y,i}|^2\xi_{\LL_t^i}^{(2)}).
\end{split}
\end{equation*}
Notice from \eqref{eq5},  \eqref{d7} and \eqref{d1} that
\begin{equation*}
\begin{split}
\Gamma(t)
&\le
\bb\ss\dd\E(| Y_t^{x,i}-Y_t^{y,i}|^2\xi_{\LL_t^i}^{(2)}),
\end{split}
\end{equation*}
in which $\bb>0$ is defined  in \eqref{*8}. Consequently, we have
\begin{equation*}
\begin{split}
\E(\e^{\rr t}|Y_t^{x,i}-Y_t^{y,i}|^2\xi_{\LL_t^i}^{(2)})&\le
|x-y|^2\xi_i^{(2)}-(\eta_2-\rr-\bb\ss\dd)\int_0^t\e^{\rr s}
\E(|Y_s^{x,i}-Y_s^{y,i}|^2\bar\xi_{\LL_s^i}^{(2)})\d s,
\end{split}
\end{equation*}
 Choosing  $\rr=(\eta_2-\bb\sqrt{\dd})>0$ due to \eqref{c2}
 leads to \eqref{w1}.
\end{proof}

Now we construct an example to show an application of Theorem
\ref{existence}.

\begin{exa}\label{Exa1}
{\rm Let $\{\LL_t\}_{t\ge0}$ be a right-continuous Markov chain
taking values in $\mathbb{S}:=\{1,2,3\}$ with the generator
\begin{equation*}
Q= \left(\begin{array}{ccc}
  -(3+\nu) & \nu & 3\\
  1 & -3 & 2\\
  1 &2 &-3
  \end{array}
  \right)
  \end{equation*}
for some $\nu\ge0.$  Consider a scalar linear SDE with regime
switching
\begin{equation}\label{**5}
\d X_t=\aa_{\LL_t}X_t\d t+\si_{\LL_t}X_t\d
W_t,~~~t\ge0,~~X_0=x,~\LL_0=i_0,
\end{equation}
where $\aa_\cdot,\si_\cdot:\mathbb{S}\mapsto\R$ such that
$$\aa_1=\ff{1}{2},\aa_2=-2,\aa_3=-3,~~~~\si_1=\ff{1}{3},\si_2=2,\si_3=1.$$

\smallskip
Observe that \eqref{eq5}   holds  with $L=4$, and ({\bf H}) holds
for   $\bb_1=\ff{10}{9}, \bb_2=0$, and $\bb_3=-5$. Since the Markov
chain possesses the stationary distribution
\begin{equation*}
\mu=(\mu_1,\mu_2,\mu_3)=\Big(\ff{5}{20+5\nu},\ff{6+3\nu}{20+5\nu},\ff{9+2\nu}{20+5\nu}\Big).
\end{equation*}
Note that the solution of the equation
$$
\d X_t^{(1)}=\aa_{\LL_t}X_t^{(1)}\d t+\si_{\LL_t}X_t^{(1)}\d
W_t
$$
with $X_0^{(1)}\neq 0$ will explode to $\infty$ with probability
one. However it is easy to see that \eqref{eq8} and \eqref{*6} are
satisfied respectively for any $\nu\ge0$. Then,  $(\bar
Y_{k\dd}^{x,i},\LL_{k\dd}^i)$  has a unique invariant measure for
sufficiently small $\dd\in(0,1).$
 }
\end{exa}

We can also obtain the following convergence  rate of numerical
invariant measure.

\begin{thm}\label{W1}
{\rm Under   the assumptions of Theorem \ref{existence},  for
$\dd\in(0,1)$ such that \eqref{c2}, there exists $c>0$ such that
\begin{equation*}
W_1(\pi,\pi^\dd)\le c\dd^{1/2}.
\end{equation*}}
\end{thm}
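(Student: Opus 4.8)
The plan is to mirror the argument of Theorem \ref{W_p}, which handled the additive-noise case, and adapt it to the multiplicative-noise setting now governed by the $p=2$ (i.e. $W_1$) estimates of Theorem \ref{existence}. The core idea is a triangle-inequality decomposition of $W_1(\pi,\pi^\dd)$ through the finite-time laws at a fixed, sufficiently large time $k\dd$. Concretely, I would write
\begin{equation*}
W_1(\pi,\pi^\dd)\le W_1(\pi,\dd_{(x,i)}P_{k\dd})+W_1(\dd_{(x,i)}P_{k\dd},\dd_{(x,i)}P_{k\dd}^\dd)+W_1(\dd_{(x,i)}P_{k\dd}^\dd,\pi^\dd),
\end{equation*}
for an arbitrary but fixed starting point $(x,i)$, and then control each of the three terms separately.

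First I would handle the two outer terms, namely the distance between each invariant measure and the corresponding time-$k\dd$ transition. Using the invariance $\pi P_{k\dd}=\pi$ and $\pi^\dd P_{k\dd}^\dd=\pi^\dd$, together with the averaging-over-initial-data bounds
\begin{equation*}
W_1(\dd_{(x,i)}P_{k\dd},\pi)\le\int_{\R^n\times\mathbb{S}}\pi(\d y\times\{j\})\,W_1(\dd_{(x,i)}P_{k\dd},\dd_{(y,j)}P_{k\dd}),
\end{equation*}
and the analogous inequality for $P_{k\dd}^\dd$, I would invoke the contraction estimates established inside the proofs of Theorem \ref{exa} and Theorem \ref{existence}. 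The continuous-time estimate \eqref{w1}, combined with the coupling-time bound \eqref{T5}--\eqref{T1} for the chain $\LL$, gives exponential decay $W_1(\dd_{(x,i)}P_{k\dd},\dd_{(y,j)}P_{k\dd})\le c\,\e^{-\rr k\dd}(1+|x|+|y|)$, and the parallel numerical contraction \eqref{T6} gives the same decay for $P_{k\dd}^\dd$. Choosing $k$ large so that $c\,\e^{-\rr k\dd}\le\dd^{1/2}$ bounds both outer terms by $c_1\dd^{1/2}$.

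The middle term is the genuine finite-time discretization error $W_1(\dd_{(x,i)}P_{k\dd},\dd_{(x,i)}P_{k\dd}^\dd)$, which measures how far the EM scheme drifts from the true process over the fixed horizon $k\dd$. Here I would appeal to the standard finite-time strong-convergence theorem for EM schemes of regime-switching SDEs cited as \cite[Theorem 3.1]{YM04}, yielding $W_1(\dd_{(x,i)}P_{k\dd},\dd_{(x,i)}P_{k\dd}^\dd)\le c_2\dd^{1/2}$ for the now-fixed $k$. Adding the three pieces and using the triangle inequality gives $W_1(\pi,\pi^\dd)\le c\dd^{1/2}$, as claimed.

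The main obstacle is the interplay between the two limits: the outer-term bound needs $k$ large, whereas the middle-term constant $c_2$ from the finite-time convergence result grows with the horizon $k\dd$. The key is that $k$ is fixed \emph{first}, depending only on $\rr$ and the target rate $\dd^{1/2}$ but not on $\dd$ itself (the exponent of $\dd$ in the outer bound is governed by $k$ only through the requirement $\e^{-\rr k\dd}\lesssim\dd^{1/2}$, which one can satisfy with $k$ of order $\dd^{-1}\log(1/\dd)$ or, more cleanly, by taking the horizon $T=k\dd$ fixed independent of $\dd$); one must verify that this choice keeps $c_2$ uniformly bounded in $\dd$. Since the multiplicative-noise contraction holds only in the $W_1$ ($p=2$) metric under the extra hypothesis \eqref{*6}, I would make sure throughout that all three estimates are phrased in $W_1$ and that the stepsize restriction \eqref{c2} is in force, so that $\eta_2-\bb\sqrt{\dd}>0$ and the decay rate $\rr$ in \eqref{w1} is strictly positive.
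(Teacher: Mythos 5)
Your proposal is correct and takes essentially the same route as the paper: the paper omits the proof of Theorem \ref{W1}, stating only that it is similar to that of Theorem \ref{W_p}, and the proof of Theorem \ref{W_p} is precisely your three-term triangle decomposition through $\dd_{(x,i)}P_{k\dd}$ and $\dd_{(x,i)}P_{k\dd}^{\dd}$, with the outer terms controlled by the contraction estimates extracted from the proofs of Theorems \ref{exa} and \ref{existence} and the middle term by the finite-time EM convergence result \cite[Theorem 3.1]{YM04}. Your closing discussion of the interplay between the choice of $k$ and the horizon-dependence of the finite-time constant is in fact more careful than the paper's own presentation, which passes over this point silently.
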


\begin{proof}
We omit the proof of Theorem \ref{W1} since it is  similar to
that of Theorem \ref{W_p}.

\end{proof}

\section{Numerical Invariant Measure:  Reversible Case}\label{sec4}
In the last section, we investigate existence and uniqueness of
numerical invariant measures for RSDPs with additive noises and
multiplicative noises respectively, where the Markov chain
$\{\LL_t\}_{t\ge0}$ need not to be reversible, i.e.,
$\pi_iq_{ij}=\pi_jq_{ji},i,j\in\mathbb{S}$, for some probability
measure $\pi:=(\pi_1,\cdots,\pi_N)$. While, throughout this section,
we shall always assume that the Markov chain $\{\LL_t\}_{t\ge0}$,
with the state space $\mathbb{S}:=\{1,\cdots,N\}$, $N<\8,$ is
reversible with the probability measure $\pi$ above. For such case,
under a new condition  we  study existence and
uniqueness of numerical invariant measure   for  multiplicative
noise case.

\smallskip

To begin with, we need to introduce some notation.
 Let
\begin{equation*}
L^2(\pi):=\Big\{f\in\B(\mathbb{S}):\sum_{i=1}^N\pi_if_i^2<\8\Big\}.
\end{equation*}
Then $(L^2(\pi),\<\cdot,\cdot\>_0,\|\cdot\|_0)$ is a Hilbert space
with the inner product $\<f,g\>_0:=\sum_{i=1}^N\pi_if_ig_i,f,g\in
L^2(\pi)$. Define the bilinear form $(D(f),\mathscr{D}(D))$ as
\begin{equation*}
D(f):=\ff{1}{2}\sum_{i,j=1}^N\pi_iq_{ij}(f_j-f_i)^2-\sum_{i=1}^N\pi_i\bb_if_i^2,~~~f\in
L^2(\pi),
\end{equation*}
where $\bb_i\in\R,i\in\mathbb{S}$, is given in ({\bf H}), and the
domain
\begin{equation*}
\mathscr{D}(D):=\{f\in L^2(\pi):D(f)<\8\}.
\end{equation*}
The principal eigenvalue $\lambda_0$ of $D(f)$ is defined by
\begin{equation*}
\lambda_0:=\inf\{D(f):f\in\mathscr{D}(D),\|f\|_0=1\}.
\end{equation*}
For more details on the first eigenvalue, refer to \cite[Chapter
3]{Ch}. Due to the fact that the state space of $\{\LL_t\}_{t\ge0}$
is finite, there exists $\xi=(\xi_1,\cdots,\xi_N)\in\mathscr{D}(D)$
such that
\begin{equation}\label{***}
D(\xi)=\lambda_0\|\xi\|^2_0.
\end{equation}
For $\xi\in\mathscr{D}(D)$ such that \eqref{***} holds, set
\begin{equation*}
\tt\xi_1:=\max_{i\in\mathbb{S}}\xi_i,
~~\tt\xi_2:=(\min_{i\in\mathbb{S}}\xi_i)^{-1}.
\end{equation*}
Let
\begin{equation}\label{*0}
 \kappa:=\{(1+12q_0)\bb_0+8L^2(5+6\bb_0)\}\tt\xi_1\tt\xi_2,
\end{equation}
where $q_0,\bb_0$ are given in \eqref{T8}, and $L>0$ defined in
\eqref{eq5}.

\smallskip

The main result in this section is the following.

\begin{thm}\label{eigen}
{\rm Let $N<\8$, \eqref{eq5} and ({\bf H})   hold, and assume
further $\lambda_0>0$. Then, $(\bar Y_{k\dd}^{x,i},\LL_{k\dd}^i)$
admits a unique invariant measure $\pi^\dd\in\mathcal
{P}(\R^n\times\mathbb{S})$ for any
\begin{equation*}
 \dd<(1/(32L^2))\wedge(\lambda_0/\kk)^2.
 \end{equation*}
  }
\end{thm}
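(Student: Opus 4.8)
The plan is to reduce the statement to Theorem \ref{existence} by showing that, in the reversible case, the principal eigenfunction $\xi$ of \eqref{***} plays exactly the role that the Perron-Frobenius eigenvector $\xi^{(2)}$ played in Section \ref{sec3.1}, with $\lambda_0$ in place of $\eta_2$ and the constant $\kk$ of \eqref{*0} in place of $\bb$ of \eqref{*8}. The starting point is the identity, valid under reversibility,
\[
D(f)=-\<f,Q_2f\>_0,\qquad Q_2:=Q+\mbox{diag}(\bb),
\]
which one checks by expanding $\ff{1}{2}\sum_{i,j}\pi_iq_{ij}(f_j-f_i)^2$ and using the conservativity $\sum_jq_{ij}=0$ together with the reversibility $\pi_iq_{ij}=\pi_jq_{ji}$. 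Consequently $-\lambda_0$ is the largest eigenvalue of the self-adjoint operator $Q_2$ on $L^2(\pi)$, so that $\lambda_0=\eta_2$ and the hypothesis $\lambda_0>0$ is exactly the positivity $\eta_2>0$ exploited in Theorem \ref{existence}; in particular neither \eqref{eq8} nor \eqref{*6} is needed here. Moreover the minimizer $\xi$ obeys the pointwise eigenrelation $Q_2\xi=-\lambda_0\xi$; this is simply the Euler-Lagrange equation of $\inf\{-\<f,Q_2f\>_0:\|f\|_0=1\}$, obtained by differentiating $D(f)-\lambda(\|f\|_0^2-1)$ and invoking the two structural identities above, the Lagrange multiplier coming out equal to the minimum value $\lambda_0$.

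The main obstacle, and the only genuinely new point beyond Theorem \ref{existence}, is to guarantee that $\xi$ may be chosen strictly positive, $\xi\gg{\bf0}$, since the estimates of Theorem \ref{existence} rely on the finiteness of $\tt\xi_1$ and $\tt\xi_2$, i.e.\ on $\xi$ having no vanishing component. I would first note that $D(|f|)\le D(f)$: the potential term $-\sum_i\pi_i\bb_if_i^2$ is unchanged under $f\mapsto|f|$, while for the off-diagonal entries, where $q_{ij}\ge0$, one has $(|f_j|-|f_i|)^2\le(f_j-f_i)^2$. Hence the infimum in \eqref{***} is attained at a nonnegative $\xi$. Strict positivity then follows from irreducibility: for $c>0$ large, $Q_2+cI$ has nonnegative entries and is irreducible, so $\e^{t(Q_2+cI)}$ is a strictly positive matrix, and the Perron-Frobenius theorem forces its leading eigenvector, namely the ground state $\xi$, to satisfy $\xi\gg{\bf0}$. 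This recovers, via the variational route, the ground-state positivity that Perron-Frobenius supplied directly in Section \ref{sec3.1}.

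With $\lambda_0>0$ and $\xi\gg{\bf0}$ satisfying $Q_2\xi=-\lambda_0\xi\ll{\bf0}$ in hand, the remainder is verbatim the proof of Theorem \ref{existence}, reading $\lambda_0$ for $\eta_2$, $\xi$ for $\xi^{(2)}$ and $\kk$ for $\bb$ throughout. For existence one applies It\^o's formula to $\e^{\rr t}\E(|Y_t^{x,i}|^2\xi_{\LL_t^i})$, controls the discretization defects through \eqref{d4} and the one-step jump estimate \eqref{T4} (with $p=2$), absorbs them into a term of order $\kk\ss\dd$, and chooses $\rr=\lambda_0-\kk\ss\dd>0$ --- admissible because $\dd<(\lambda_0/\kk)^2$ --- to get $\sup_{k\ge0}\E|\bar Y_{k\dd}^{x,i}|^2<\8$, whence tightness of the averaged measures by Chebyshev's inequality. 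For uniqueness one repeats the computation on $\e^{\rr t}\E(|Y_t^{x,i}-Y_t^{y,i}|^2\xi_{\LL_t^i})$, now using \eqref{r6}, \eqref{d7} and \eqref{d1}, to obtain the contraction $\E|Y_t^{x,i}-Y_t^{y,i}|^2\le c\e^{-\rr t}|x-y|^2$, and then couples $\LL^i$ with $\LL^j$ exactly as in the proof of Theorem \ref{exa}. The condition $\dd<1/(32L^2)$ is again what validates \eqref{d4} and \eqref{d7}--\eqref{d1}.
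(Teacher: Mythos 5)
Your proposal is correct, and its overall architecture coincides with the paper's: both proofs reduce Theorem \ref{eigen} to the argument of Theorem \ref{existence} once one has a strictly positive vector $\xi\gg{\bf0}$ satisfying $(Q\xi)(i)+\bb_i\xi_i=-\lambda_0\xi_i$ for all $i\in\mathbb{S}$, i.e. $Q_2\xi=-\lambda_0\xi\ll{\bf0}$, and then run the It\^o-formula estimates with $(\lambda_0,\xi,\kk)$ in place of $(\eta_2,\xi^{(2)},\bb)$. The difference lies in how this key input is obtained. The paper simply cites \cite[Theorem 3.2]{SX} for the positivity of the minimizer in \eqref{***} and the pointwise eigen-relation, whereas you derive both from scratch: the identity $D(f)=-\<f,Q_2f\>_0$ (valid by reversibility plus conservativity), self-adjointness of $Q_2$ on $L^2(\pi)$ giving the Euler--Lagrange equation $Q_2\xi=-\lambda_0\xi$, the inequality $D(|f|)\le D(f)$ to produce a nonnegative minimizer, and Perron--Frobenius applied to $Q_2+cI$ to upgrade nonnegativity to $\xi\gg{\bf0}$. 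This self-contained route buys two things the paper leaves implicit: (i) the identification $\lambda_0=\eta_2$ in the reversible case, which makes transparent why the single hypothesis $\lambda_0>0$ exactly replaces the pair \eqref{eq8} and \eqref{*6} used in Theorem \ref{existence}; and (ii) an explicit check that no averaging-type condition is needed, which the theorem's statement asserts but the paper's citation-based proof does not illuminate. A minor point of rigor in your positivity step: one should say that the nonnegative minimizer is an eigenvector of the irreducible nonnegative matrix $Q_2+cI$ for its spectral radius $c-\lambda_0$ when $c$ is large, and that strict positivity of all entries of $\e^{t(Q_2+cI)}$ (or simplicity of the Perron root) then forces every component of $\xi$ to be positive; your sketch contains exactly this and is sound.
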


\begin{proof}
  Recalling
\eqref{***} and checking  the argument of \cite[Theorem 3.2]{SX},
one has
\begin{equation*}
\xi\gg{\bf0} ~~\mbox{ and
}~~(Q\xi)(i)+\bb_i\xi_i=-\lambda_0\xi_i,~i\in\mathbb{S}.
\end{equation*}
The remainder of the proof is similar to that of
Theorem \ref{existence}, here we omit it.
\end{proof}

Next, an example is constructed to demonstrate  Theorem \ref{eigen}.

\begin{exa}\label{ex4.3}
{\rm Let $\{\LL_t\}_{t\ge0}$ be a right-continuous Markov chain
taking values in $\mathbb{S}:=\{0, 1,2\}$ with the generator
\begin{equation*}
Q= \left(\begin{array}{ccc}
  -b & b &0\\
  2a & -2(a+b) & 2b\\
  0 & 3a &-3a
  \end{array}
  \right)
  \end{equation*}
for some $a,b>0$.  Consider a scalar SDE with regime switching
\begin{equation}\label{**5a}
\d X_t=\aa_{\LL_t}X_t\d t+\si_{\LL_t}X_t\d W_t,~~~t\ge0,~~X_0=x,
\end{equation}
where $\aa_\cdot,\si_\cdot:\mathbb{S}\mapsto\R$ such that
\begin{equation*}
c_0=2\aa_0+\si_0^2<0,~~c_1=2\aa_1+\si_1^2,~~c_2=2\aa_2+\si_2^2.
\end{equation*}
We further assume that
\begin{equation}\label{b}
b+c_0<0,~~a-b-c_1>0,~~a-c_2>0.
\end{equation}
Note that \eqref{eq5} holds with $L=\max_{i\in
\mathbb{S}}\{|\aa_i|+|\si_i|\}$ and ({\bf H}) holds with
\begin{equation*}
\bb_0=c_0,~~\bb_1=c_1,~~,\bb_2=c_2.
\end{equation*}
Set
\begin{equation*}
\OO :=Q+\mbox{diag}(\bb_1,\cdots,\bb_N).
\end{equation*}
By the notion of $\OO$,  for $\xi_i=i+1$, $i=0,1,2,$ we deduce that
\begin{equation*}
\begin{split}
(\OO\xi)(0)=-(-b-c_0)\xi_0, ~~ (\OO\xi)(1)=-(a-b-c_1)\xi_1,~~
(\OO\xi)(2)=-(a-c_2)\xi_2.
\end{split}
\end{equation*}
Taking
\begin{equation*}
\lambda=\min\{-b-c_0,a-b-c_1,a-c_2\}>0
\end{equation*}
thanks to \eqref{b}, one finds that
\begin{equation*}
(\OO\xi)(i)\le-\lambda\xi_i,~~~i=0,1,2.
\end{equation*}
Then $\lambda_0>0$ due to \cite[Theorem 4.4]{SX}. As a result,
 $(\bar
Y_{k\dd}^{x,i},\LL_{k\dd}^i)$ has a unique invariant measure
$\pi^\dd\in\mathcal {P}(\R^n\times\mathbb{S})$ whenever the stepsize
is sufficiently small. }
\end{exa}

\begin{rem}
{\rm  The principal-eigenvalue approach has been applied
successfully to investigate ergodic property, stability and
recurrence for regime-switching diffusion processes. For more
details, please refer to   Shao \cite{S14} and Shao-Xi \cite{SX14}. As
we discuss previously, for the reversible case, such trick can also
be utilized to discuss existence and uniqueness of numerical
invariant measure for RSDPs with multiplicative noises.
}
\end{rem}

\begin{rem}\label{q1}
{\rm Theorem \ref{eigen} can also be extended into the case of RSDPs
with countable state spaces (i.e. $N=\8$) provided that $\lambda_0$
is attainable, i.e., there exists $f\in L^2(\pi),f\neq0$, such that
$D(f)=\lambda_0\|f\|_0^2.$ For more details, please refer to
\cite[Theorem 3.2]{SX}.

}
\end{rem}

\section{Numerical Invariant Measure: Countable State Space}\label{sec5}
The approach based on the Perron-Frobenius theorem (see Theorem
\ref{boun} and \ref{existence}) is not suitable to the case that
$\mathbb{S}$ is a countable state space, i.e., $N=\8,$ while the
approach based on the principal eigenvalue (see Theorem \ref{eigen})
can be applied to this case under some additional conditions as
being pointed out in Remark \ref{q1}. Now in this section, we shall
introduce another method to deal with the case $N=\8$, based on a
finite partition approach and an $M$-matrix theory.
\begin{defn}
{\rm (see e.g. \cite[Definition 2.9, p.67]{YM06}) A square matrix
$A=(a_{ij})_{n\times n}$ is called a nonsingular $M$-matrix if $A$
can be expressed in the form $A=sI-B$ with $B\gg{\bf0}$ and
$s>\mbox{Ria}(B)$, where $I$ is the $n\times n$ identity matrix and
$\mbox{Ria}(B)$ the spectral radius of $B.$ }

\end{defn}

 We further suppose that
\begin{equation}\label{r3}
K:=\sup_{i\in\mathbb{S}}\bb_i<\8~~~~\mbox{ and }~~~~
\sup_{i\in\mathbb{S}}(-q_{ii})<\8,
\end{equation}
where $\bb_i\in\R$ is given in ({\bf H}). Let us insert $m$ points
in the interval $(-\8,K]$ as follows:
\begin{equation*}
-\8=:k_0<k_1<\cdots<k_m<k_{m+1}:=K.
\end{equation*}
Then, the interval $(-\8,K]$ is divided into $m+1$ sub-intervals
$(k_{i-1},k_i]$  indexed by $i$. Let
\begin{equation*}
F_i:=\{j\in\mathbb{S}:\bb_j\in(k_{i-1},k_i]\},~~i=1,\cdots,m+1.
\end{equation*}
Without loss of generality, we can and do assume that each $F_i$ is
not empty. Then
\begin{equation*}
F:=\{F_1,\cdots,F_{m+1}\}
\end{equation*}
is a finite partition of $\mathbb{S}$. For $i,j=1,\cdots,m+1$, set
\begin{equation*}
q_{ij}^F:=
\begin{cases}
\sup_{r\in F_i}\sum_{k\in
F_j}q_{rk},~~~~~~~~j<i,\\
\inf_{r\in F_i}\sum_{k\in F_j}q_{rk},~~~~~~~~~j>i,\\
-\sum_{j\neq i}q_{ij}^F,~~~~~~~~~~~~~~~~~i=j.
\end{cases}
\end{equation*}
So $Q^F:=(q_{ij}^F)$ is the  $Q$-matrix for some Markov chain with
the state space $\mathbb{S}_0:=\{1,\cdots,m+1\}.$ For
$i=1,\cdots,m+1$, let
\begin{equation*}
\bb_i^F:=\sup_{j\in F_i}\bb_j,~~~~~~~~H_{m+1}:=
\left(\begin{array}{ccccc}
  1 & 1 &1 & \cdots & 1\\
  0 & 1 &1 &   \cdots &1\\
  \vdots & \vdots & \vdots & \cdots & \vdots\\
  0 & 0 & 0 & \cdots & 1\\
  \end{array}
  \right)_{(m+1)\times(m+1)}.
  \end{equation*}

\begin{thm}\label{coun}
{\rm Let $N=\8$, \eqref{eq5}, ({\bf H}) and \eqref{r3}  hold. Assume
further that $\{\LL_t\}_{t\ge0}$  is exponential ergodic and that
\begin{equation*}
-(Q^F+\mbox{diag}(\bb_1^F,\cdots,\bb_{m+1}^F))H_{m+1}
\end{equation*}
is a nonsingular $M$-matrix. Then $(\bar
Y_{k\dd}^{x,i},\LL_{k\dd}^i)$ admits a unique measure
$\pi^\dd\in\mathcal {P}(\R^n\times\mathbb{S})$ whenever the stepsize
$\dd\in(0,1)$ is sufficiently small.

}
\end{thm}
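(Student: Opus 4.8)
The plan is to turn the countable-state problem into the finite-state situation of Theorem \ref{existence} by manufacturing, out of the $M$-matrix hypothesis, a bounded and strictly positive weight $g$ on $\mathbb{S}$ that plays the role of the Perron--Frobenius eigenvector $\xi^{(2)}$ there. First I would invoke the characterization of nonsingular $M$-matrices (\cite[Theorem 2.10, p.68]{YM06}): since $-(Q^F+\mbox{diag}(\bb_1^F,\cdots,\bb_{m+1}^F))H_{m+1}$ is a nonsingular $M$-matrix, there is a vector $\xi=(\xi_1,\cdots,\xi_{m+1})\gg{\bf0}$ with $-(Q^F+\mbox{diag}(\bb_1^F,\cdots,\bb_{m+1}^F))H_{m+1}\xi\gg{\bf0}$. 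Setting $\theta:=H_{m+1}\xi$ and using that $H_{m+1}$ is upper-triangular with unit entries, I get $\theta_i=\sum_{l\ge i}\xi_l$, which is strictly positive and strictly \emph{decreasing} in $i$, and satisfies $(Q^F+\mbox{diag}(\bb_1^F,\cdots,\bb_{m+1}^F))\theta\ll{\bf0}$. Lifting $\theta$ to $\mathbb{S}$ through $g(j):=\theta_i$ for $j\in F_i$ then produces a function bounded above by $\theta_1$ and below by $\theta_{m+1}>0$, the finitely many blocks $F_1,\dots,F_{m+1}$ guaranteeing both bounds.

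The crucial and most delicate step is the finite-partition comparison of Shao \cite{S14}, which converts the finite inequality into a genuine Lyapunov inequality on all of $\mathbb{S}$. For $j\in F_i$ I would write $(Qg)(j)=\sum_{l=1}^{m+1}\theta_l\sum_{k\in F_l}q_{jk}$ and compare the inner sums with $q_{il}^F$: by the sign conventions in the definition of $q_{il}^F$ one has $\sum_{k\in F_l}q_{jk}\le q_{il}^F$ for $l<i$ and $\ge q_{il}^F$ for $l>i$, while conservativeness gives $\sum_l\sum_{k\in F_l}q_{jk}=0=\sum_l q_{il}^F$. Subtracting $\theta_i$ times this null sum and using the monotonicity $\theta_l-\theta_i>0$ for $l<i$ and $<0$ for $l>i$, every term of $\sum_l(\theta_l-\theta_i)(\sum_{k\in F_l}q_{jk}-q_{il}^F)$ is nonpositive, whence $(Qg)(j)\le\sum_l q_{il}^F\theta_l$. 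Since $\bb_j\le\bb_i^F$ and $g(j)>0$, this yields that $(Qg)(j)+\bb_jg(j)$ is bounded by the $i$-th component of $(Q^F+\mbox{diag}(\bb_1^F,\cdots,\bb_{m+1}^F))\theta$, which is strictly negative; dividing by $g(j)=\theta_i$ over the finitely many blocks gives a single $\lambda>0$ with $(Qg)(j)+\bb_jg(j)\le-\lambda g(j)$ for every $j\in\mathbb{S}$. This is exactly the countable analogue of \eqref{T7}, with $(g,\lambda)$ in place of $(\xi^{(2)},\eta_2)$, and the bound $\sup_i(-q_{ii})<\8$ from \eqref{r3} ensures $(Qg)(\cdot)$ is well defined and bounded.

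With $g$ and $\lambda$ available, I would rerun the It\^o computations of Theorem \ref{existence} applied to $\E(|\bar Y_{k\dd}^{x,i}|^2g(\LL_{k\dd}^i))$ and $\E(|\bar Y_{k\dd}^{x,i}-\bar Y_{k\dd}^{y,i}|^2g(\LL_{k\dd}^i))$; the two-sided bounds on $g$ convert these into the uniform moment estimate $\sup_{k\ge0}\E|\bar Y_{k\dd}^{x,i}|^2<\8$ and the contraction $\E|\bar Y_{k\dd}^{x,i}-\bar Y_{k\dd}^{y,i}|^2\le c|x-y|^2\e^{-\lambda k\dd}$, both valid once $\dd$ is small. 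The smallness threshold is of the same shape as \eqref{c2}, its ingredients $\sup_i|\bb_i|$, $\sup_i(-q_{ii})$, $L$ and the oscillation of $g$ all being finite by \eqref{r3} and \eqref{eq5}; here the one-step jump term is controlled by \eqref{T4} with $q_0$ replaced by $\sup_i(-q_{ii})$.

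Existence of $\pi^\dd$ then follows by tightness of the averaged measures and a Krylov--Bogolyubov extraction as in Theorem \ref{boun}. The new difficulty for $N=\8$ is that the moment bound only controls the $\R^n$-marginal, whereas $\mathbb{S}$ is no longer compact; this is where I would use the exponential ergodicity of $\{\LL_t\}_{t\ge0}$ to confine the chain component to a finite subset of $\mathbb{S}$ up to arbitrarily small mass, which combined with the compact balls $B_R(0)$ restores tightness on $\R^n\times\mathbb{S}$. Uniqueness proceeds as in Theorem \ref{exa}: exponential ergodicity supplies a coupling of $\LL^i,\LL^j$ whose coupling time $\tau$ obeys $\P(\tau>t)\le c\e^{-\theta t}$, and splitting over $\{\tau>t\}$ and $\{\tau\le t\}$ with H\"older's inequality, the contraction, and the Kantorovich--Rubinstein duality \eqref{T} drives the Wasserstein distance between two invariant measures to $0$. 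I expect the two main obstacles to be the finite-partition comparison of the second paragraph and, because of the countable state space, the tightness of the $\mathbb{S}$-component, which is precisely what forces the exponential-ergodicity hypothesis.
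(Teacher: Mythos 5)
Your proposal is correct and takes essentially the same route as the paper: the nonsingular $M$-matrix hypothesis produces a positive vector which, after multiplication by $H_{m+1}$, becomes positive and decreasing, is lifted to $\mathbb{S}$ blockwise, and satisfies the finite-partition comparison inequality (the paper's \eqref{r2}), yielding the uniform negative Lyapunov drift. The remainder — rerunning the It\^o estimates of Theorem \ref{existence} for the moment bound and contraction, tightness of the $\mathbb{S}$-component via ergodicity of the chain, and uniqueness via the exponential coupling-time bound \eqref{T5} supplied by exponential ergodicity — matches the paper's argument step for step.
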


\begin{proof}
Some ideas of the argument go back to \cite[Theorem 4.1]{S14}.
Moreover, we only sketch the argument of Theorem \ref{coun} since it
is analogous to that of Theorem \ref{existence}.

\smallskip

Since $-(Q^F+\mbox{diag}(\bb_1^F,\cdots,\bb_{m+1}^F))H_{m+1}$ is a
nonsingular $M$-matrix, by \cite[Theorem 2.10, p.68]{YM06} there
exists  a vector $\eta^F:=(\eta_1^F,\cdots,\eta_{m+1}^F)^*\gg{\bf0}$
such that
\begin{equation}\label{w3}
(-\lambda_1^F,\cdots,-\lambda_{m+1}^F)^*:=(Q^F+\mbox{diag}(\bb_1^F,\cdots,\bb_{m+1}^F))H_{m+1}\eta^F\ll{\bf0}.
\end{equation}
Set $\xi^F:=H_{m+1}\eta^F$. By the structure of $H_{m+1}$, it is
trivial to see that
\begin{equation*}
\xi_i^F=\eta^F_{m+1}+\cdots+\eta^F_i,~~i=1,\cdots,m+1.
\end{equation*}
This, together with $\eta^F\gg{\bf0}$, yields that $\xi^F\gg{\bf0}$
and $\xi^F_{i+1}<\xi^F_i,i=1,\cdots,m+1$. Next, we extend the vector
$\xi^F$ to be a vector on $\mathbb{S}$ by setting $\xi_r:=\xi_i^F$
for $r\in F_i$.  Moreover, let
$\phi:\mathbb{S}\mapsto\{1,\cdots,m+1\}$ be a map defined by
$\phi(j):=i$ for $j\in F_i.$ Then, by the definition of $\bb_i^F$,
one has
\begin{equation}\label{r1}
\xi_r=\xi_i^F=\xi_{\phi(r)}^F~\mbox{ and }~ \bb_r\le
\bb^F_{\phi(r)}, ~~~r\in F_i.
\end{equation}
 For any $r\in\mathbb{S}$, there exists
$F_i$ such that $r\in F_i$. Recalling the definition of $q_{ij}^F$
and utilizing $\xi^F_{i+1}<\xi^F_i,i=1,\cdots,m+1$, we derive from
\eqref{r1} that, for $r\in F_i$,
\begin{equation}\label{r2}
\begin{split}
(Q\xi)(r) &=\sum_{k<i}\sum_{j\in
F_k}q_{rj}(\xi_j-\xi_r)+\sum_{k>i}\sum_{j\in
F_k}q_{rj}(\xi_j-\xi_r)\\
&=\sum_{k<i}\sum_{j\in
F_k}q_{rj}(\xi_k^F-\xi_i^F)+\sum_{k>i}\sum_{j\in
F_k}q_{rj}(\xi_k^F-\xi_i^F)\\
&\le\sum_{k<i}q_{ik}^F(\xi_k^F-\xi_i^F)+\sum_{k>i}q_{ik}^F(\xi_k^F-\xi_i^F)\\
&=(Q^F\xi^F)(i)=(Q^F\xi^F)(\phi(r)).
\end{split}
\end{equation}
For any $\rr>0$, observe  from \eqref{w3}-\eqref{r2}  that
\begin{equation*}
\begin{split}
&\e^{\rr t}\E(|Y_t^x|^2\xi_{\LL_t^i})\\ &\le c(|x|^2 + \e^{\rr
t})+\E\int_0^t\e^{\rr
s}\{\rr\xi_{\LL_s^i}+(Q\xi)(\LL_s^i)+\bb_{\LL_s^i}\xi_{\LL_s^i}\}|Y_s^x|^2\d
s+\int_0^t\e^{\rr
s}\{\Lambda_1(s)+\Lambda_2(s)\}\d s\\
&\le c(|x|^2 + \e^{\rr t})+\E\int_0^t\e^{\rr
s}\{\rr\xi_{\phi(\LL_s^i)}^F+(Q^F\xi^F)(\phi(\LL_s^i))+
\bb^F_{\phi(\LL_s^i)}\xi_{\phi(\LL_s^i)}^F\}|Y_s^x|^2\d s\\
&\quad+\int_0^t\e^{\rr
s}\{\Lambda_1(s)+\Lambda_2(s)\}\d s\\
&\le  c(|x|^2 + \e^{\rr t})-\E\int_0^t\e^{\rr
s}\{\lambda_{\phi(\LL_s^i)}^F-\rr\xi_{\phi(\LL_s^i)}^F\}|Y_s^x|^2\d
s+\int_0^t\e^{\rr
s}\{\Lambda_1(s)+\Lambda_2(s)\}\d s\\
&\le  c(|x|^2 + \e^{\rr t})-(\lambda_0-\rr\xi_0)\E\int_0^t\e^{\rr
s}|Y_s^x|^2\d s+\int_0^t\e^{\rr s}\{\Lambda_1(s)+\Lambda_2(s)\}\d s
\end{split}
\end{equation*}
where $\lambda_0:=\min_{i\in\mathbb{S}_0}\lambda_i^F>0$ due to
\eqref{w3}, $\xi_0:=\max_{i\in\mathbb{S}_0}\xi_i^F$, and $\Lambda_i$
is defined as in \eqref{T9}. Then, following the argument of Theorem
\ref{existence}, we have
\begin{equation*}
\sup_{t\ge0}\E|Y_t^x|^2<\8.
\end{equation*}
Then, ergodicity of $\{\LL_t\}_{t\ge0}$   yields existence of
numerical invariant measure whenever the stepsize $\dd>0$ is
sufficiently small. The proof of the uniqueness is similar to that of Theorem \ref{coun}, since  Markov chain  $\{\LL_t\}_{t\ge0}$ is exponential ergodic, \eqref{T5} holds.
\end{proof}

\begin{rem}
{\rm $(\bar Y_{k\dd}^{x,i},\LL_{k\dd}^i)$ associated with
\cite[Example 4.1]{S14} admits a numerical
 invariant measure whenever the stepsize $\dd>0$ is
sufficiently small. }
\end{rem}

\end{document}